\newtheorem{theorem}{Theorem}[section]
\newtheorem{corollary}[theorem]{Corollary}
\newtheorem{lemma}[theorem]{Lemma}
\newtheorem{remark}[theorem]{Remark}
\newtheorem{proposition}[theorem]{Proposition}
\newtheorem{example}[theorem]{Example}
\newtheorem{definition}[theorem]{Definition}
\numberwithin{equation}{section}
\newcommand{\C}{{\ensuremath{\mathbb{C}}}}
\newcommand{\Cm}{{\ensuremath{\C^{m\times n}}}}
\newcommand{\Cn}{{\ensuremath{\C^{n\times m}}}}
\newcommand{\Cnn}{{\ensuremath{\C^{n\times n}}}}
\newcommand{\Cmm}{{\ensuremath{\C^{m\times m}}}}
\newcommand{\Crr}{{\ensuremath{\C^{r\times r}}}}
\newcommand{\Ra}{{\ensuremath{\cal R}}}
\newcommand{\Nu}{{\ensuremath{\cal N}}}
\newcommand{\rk}{{\ensuremath{\rm rank}}}
\newcommand{\core}{\mathrel{\text{\textcircled{$\#$}}}} %core
\newcommand{\rank}{{\ensuremath{\rm rank}}}
\newcommand{\ind}{{\ensuremath{\text{\rm Ind}}}}
\newcommand{\st}{\stackrel{*}\le} % star order
\newcommand{\co}{\stackrel{\core}\le}   % core order
\newcommand{\cm}{{\text{\tiny \rm GM}}}
\newcommand{\cmbis}{\C_n^{\cm}}
\begin{document}

\author{D.E. Ferreyra\thanks{Universidad Nacional de R\'io Cuarto, CONICET, FCEFQyN, RN 36 KM 601, 5800 R\'io Cuarto, C\'ordoba, Argentina. E-mail: \texttt{deferreyra@exa.unrc.edu.ar, flevis@exa.unrc.edu.ar, pmoas@exa.unrc.edu.ar}} ,  F.E. Levis$^*$,  Saroj B. Malik\thanks{School of Liberal Studies, Ambedkar University, Kashmere Gate, Delhi, India. E-mail:  \texttt{saroj.malik@gmail.com}} , R.P. Moas$^*$}

\title{One sided Star and Core orthogonality of matrices}
\date{}
\maketitle

\begin{abstract}
We investigate two one-sided orthogonalities of matrices, the first of which is left (right) $*$-orthogonality for rectangular matrices and the other is left (right) $\tiny\core$-orthogonality of index $1$ matrices. We obtain some basic results for these matrices, their canonical forms, and characterizations. Also, relations between left (right) orthogonal matrices and parallel sums are investigated. Finally under these one-sided orthogonalities we explore the conditions of additivity of the Moore-Penrose inverse and the core inverse. 
\end{abstract}

Mathematics Subject Classification (2020): 15A09, 06A06, 15A27, 15B57.

Keywords: Left (Right) $*$-orthogonal matrices,  left (right) core orthogonal matrices,  parallel sum, Moore-Penrose inverse, core inverse, rank additivity.

\section{Introduction}
\subsection{Mise-en-scéne}

The concept of $*$-orthogonality was  introduced by Hestenes  \cite{Hes} in order to develop a spectral theory for rectangular matrices.  Since then, it has been extensively used in studying various problems like the additive property of the Moore-Penrose inverse, rank additivity, partial orders and many more problems, which further  have been used successfully for parallel sums and shorted matrices (via parallel sums) \cite{HaSt, HaOmSmSt}. The $*$-orthogonality also plays an important role  in the matrix version of Cochran's theorem, and it's newer versions \cite{MiBhMa}. Recently, in \cite{FeMa3} the authors introduced  the concept of relative EP matrix of a rectangular matrix relative to a partial isometry matrix (or, in short, $T$-EP) by using the Moore-Penrose inverse. It was proved that the $*$-orthogonality is a sufficient condition for the sum of two rectangular matrices $T$-EP to be  $T$-EP. 
In \cite{FeMa2}  the same authors, studied the concepts of Core orthogonality and Strongly core orthogonality of two square matrices of the index  1, by using the core inverse \cite{BaTr} instead of the Moore-Penrose inverse.  Two recent works in this direction can be found in \cite{MoDoKuMa, LiWaWa}.  

From school geometry it is known that if a line ${\ell}_1$ is perpendicular to another line ${\ell}_2$ and ${\ell}_2$ is perpendicular to ${\ell}_3$, then either the lines ${\ell}_1$ and ${\ell}_3$ are coincident or are parallel. It's well known that the notion of orthogonal matrices is a generalization of this (perpendicularity) to matrices. How do we view parallelism in matrices. One way and perhaps the best way is to think of the parallel sums. It may be worth to note that the parallel sums originally arose in the study of network synthesis \cite{AnDu}. Recall the parallel sums have various applications, for instance,  in networking, electrical engineering as well as in statistical problems connected with the estimation of parameters in a linear system \cite{MiOd, MiBhMa}.

The present work deals to the notion  of left (right) $*$-orthogonality between two rectangular matrices, that is,  the one-sided version of $*$-orthogonality. We then develop their connection with parallel sums and the  additivity  property of the Moore-Penrose inverse.  Correspondingly, for index $1$ matrices, one-sided core orthogonality has also been explored for its properties and properties similar to one-sided $*$-orthogonality.  

The paper is organized as follows. In Section 2, we introduce the notion  of left (right) $*$-orthogonality between two rectangular matrices. We present their canonical forms by using the Singular Value Decomposition. In particular, we derive interesting connections with parallel sums and the Moore-Penrose inverse of the sum of two matrices. In Section 3, we study the one-sided version of the core orthogonality recently introduced in \cite{FeMa2}. In particular, we obtain some new results on the core-additivity. 

\subsection{Notation and preliminaries results}

For $A\in\Cm$, the symbols $A^*$,  $\Ra(A)$, $\Nu(A)$, and $\rk(A)$,  will stand for the conjugate transpose, column space, null space, and rank of $A$, respectively.  Further, $I_n$ will refer the  identity matrix of order $n$. The index of $A\in \Cnn$, denoted by $\ind(A)$, is the smallest nonnegative integer $k$ such that $\rk(A^k)=\rk(A^{k+1})$. 
For a subspace $\mathcal{M}$ we denote by $\mathcal{M}^\perp$   the orthogonal complement of $\mathcal{M}$.
 
Usually,  $A^\dag \in \Cn$, will stand for the Moore-Penrose inverse of $A$, i.e., the unique matrix satisfying the four equations
\[AA^\dag A=A, \quad A^\dag AA^\dag=A^\dag,
\quad (AA^\dag)^*=AA^\dag, \quad (A^\dag A)^*=A^\dag A.\]
It is well-known that the Moore-Penrose inverse  can be used to represent orthogonal projectors. 
Denote  $P_A=AA^{\dag}$, $Q_A=A^{\dag}A$, $\overline{P}_A=I_m-P_A$, and $\overline{Q}_A=I_n-Q_A$, which are the orthogonal projectors onto  $\Ra(A)$, $\Ra(A^*)$, $\Nu(A^*)
$, and $\Nu(A)$, respectively. Also, a matrix $A^- \in \Cn$ that satisfies the equality $AA^-A = A$ is called an $g$-inverse (or inner inverse) of $A$, and the set of all $g$-inverses of $A$ is denoted by $A\{1\}$. 

In this paper we will use the core inverse $A^{\core}$ of a matrix $A\in \Cnn$ introduced by Baksalary and Trenkler in \cite{BaTr}. Recall that $A^{\core}$ denotes the unique matrix (whenever it exists) defined by the two conditions 
\[AA^{\core}=P_A, \quad \Ra(A^{\core})\subseteq \Ra(A).\]

It is well-known that the core inverse of $A$ exists if and only if $\ind(A)\leq 1$, in which case $A$ is also called a group matrix. The symbol $\cmbis$ will stand for the subset of $\Cnn$ consisting of group matrices. 

We finish this subsection with some auxiliary lemmas,   which will be used in this paper.

\begin{lemma}\label{MP blocks}
\cite{CaMe} Let $P$ and $Q$ be matrices of suitable size.
Then
\begin{enumerate}[(a)]
\item $\begin{bmatrix}
        0 & P \\
        0 & Q\\
      \end{bmatrix}^\dag=
       \begin{bmatrix}
         0 & 0 \\
   (P^*P+Q^*Q)^\dag P^* & (P^*P+Q^*Q)^\dag Q^*
       \end{bmatrix}.$
\item $\begin{bmatrix}
        0 & 0 \\
        P & Q
      \end{bmatrix}^\dag= \begin{bmatrix}
                             0 & P^*(PP^*+QQ^*)^\dag \\
                             0 & Q^*(PP^*+QQ^*)^\dag
                           \end{bmatrix}.$
\end{enumerate}
\end{lemma}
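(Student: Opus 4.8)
The statement is quoted from \cite{CaMe}; here is how I would prove it. The plan is to verify part (a) directly from the four defining Penrose equations and then obtain part (b) from (a) by conjugate transposition. Write $M=\begin{bmatrix}0&P\\0&Q\end{bmatrix}$, let $X$ denote the candidate matrix on the right-hand side of (a), and set $S=P^*P+Q^*Q$. Since $S$ is Hermitian and positive semidefinite, $S^\dag$ is Hermitian and $S^\dag S=SS^\dag$ is the orthogonal projector onto $\Ra(S)$. The crucial preliminary step---and the only non-mechanical one---is the pair of absorption identities $PS^\dag S=P$ and $QS^\dag S=Q$ (equivalently $S^\dag SP^*=P^*$ and $S^\dag SQ^*=Q^*$). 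These follow once I observe that, writing $C=\begin{bmatrix}P\\Q\end{bmatrix}$, one has $S=C^*C$, whence $\Ra(S)=\Ra(C^*C)=\Ra(C^*)$, a subspace containing both $\Ra(P^*)$ and $\Ra(Q^*)$; applying the projector $S^\dag S$ to the columns of $P^*$ and of $Q^*$ therefore leaves them fixed, and taking conjugate transposes (using that $S^\dag S$ is Hermitian) gives the stated identities.

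With these identities in hand the four equations reduce to block bookkeeping. I would first record $XM=\begin{bmatrix}0&0\\0&S^\dag S\end{bmatrix}$ and $MX=\begin{bmatrix}PS^\dag P^*&PS^\dag Q^*\\QS^\dag P^*&QS^\dag Q^*\end{bmatrix}$. The symmetry equations $(XM)^*=XM$ and $(MX)^*=MX$ are then immediate from the Hermitian-ness of $S^\dag$: in $MX$ the off-diagonal blocks are conjugate transposes of one another and each diagonal block is individually Hermitian, while $S^\dag S$ is an orthogonal projector. For the remaining two equations, $MXM=\begin{bmatrix}0&PS^\dag S\\0&QS^\dag S\end{bmatrix}=M$ by the absorption identities, and $XMX=X$ follows from $S^\dag SS^\dag=S^\dag$. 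Together these four verifications give $X=M^\dag$, which is part (a).

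Finally I would deduce (b) from (a) without repeating the computation. Observing that $\begin{bmatrix}0&0\\P&Q\end{bmatrix}=\begin{bmatrix}0&P^*\\0&Q^*\end{bmatrix}^{*}$ and using $(N^*)^\dag=(N^\dag)^*$, I apply part (a) to $\begin{bmatrix}0&P^*\\0&Q^*\end{bmatrix}$, that is, with $P,Q$ replaced by $P^*,Q^*$, so that $S$ becomes $PP^*+QQ^*$, and then take conjugate transposes of both sides; since $PP^*+QQ^*$ is Hermitian its pseudoinverse is Hermitian, and the resulting expression is exactly the right-hand side of (b). The main obstacle is thus concentrated entirely in the range inclusion $\Ra(P^*)\subseteq\Ra(P^*P+Q^*Q)$ (and the analogue for $Q^*$) that underlies the absorption identities; once that is settled, everything else is routine block multiplication.
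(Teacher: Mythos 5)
Your proof is correct. Note that the paper does not actually prove this lemma---it is imported verbatim from Campbell and Meyer's book \cite{CaMe}---so there is no internal argument to compare against; your direct verification of the four Penrose equations is a valid, self-contained substitute. The one genuinely non-mechanical ingredient, the absorption identity $PS^\dag S=P$ via $\Ra(P^*)\subseteq\Ra(C^*)=\Ra(C^*C)=\Ra(S)$ with $C=\begin{bmatrix}P\\Q\end{bmatrix}$, is handled correctly, and deducing (b) from (a) by conjugate transposition (using $(N^*)^\dag=(N^\dag)^*$ and the Hermitian-ness of $(PP^*+QQ^*)^\dag$) is exactly the economical way to avoid repeating the block computation.
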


Using the definition of Moore-Penrose inverse, direct calculations lead to the following representation for the Moore-Penrose of a rectangular matrix  for which its matrix block representation is block (upper) triangular with some diagonal block being nonsingular.

\begin{lemma}\label{MP triangular}
Let
$A=U\begin{bmatrix}
        A_1 & A_2 \\
        0 & A_3 \\
      \end{bmatrix}V^* \in \Cm$  such that  $A_1 \in \C^{t\times t}$ is nonsingular and $U \in {\mathbb C}^{m \times m}$ and $V \in {\mathbb C}^{n \times n}$ are unitary. Then
\begin{equation} \label{MP of A}
A^\dag = V\left[\begin{array}{cc}
A_1^*\Delta_A & -A_1^*\Delta_A A_2 A_3^\dagger\\
\Omega_A^* \Delta_A & A_3^\dagger-\Omega_A^*\Delta_A A_2 A_3^\dag
\end{array}\right]U^*,
\end{equation}
where $\Delta_A:=(A_1 A_1^*+ \Omega_A \Omega_A^*)^{-1}$ and $\Omega_A:=A_2\overline{Q}_{A_3}$.
\end{lemma}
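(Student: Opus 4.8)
The plan is to use unitary invariance to reduce the statement to a block-matrix identity, and then to verify the four defining equations of the Moore--Penrose inverse directly for the inner block. Since $U$ and $V$ are unitary we have $A=UMV^*$ with $M=\begin{bmatrix} A_1 & A_2\\ 0 & A_3\end{bmatrix}$, and therefore $A^\dag=VM^\dag U^*$. Hence it suffices to show that the inner matrix appearing in (\ref{MP of A}),
\[
X:=\begin{bmatrix} A_1^*\Delta_A & -A_1^*\Delta_A A_2 A_3^\dag\\ \Omega_A^*\Delta_A & A_3^\dag-\Omega_A^*\Delta_A A_2 A_3^\dag\end{bmatrix},
\]
is exactly $M^\dag$, i.e. that it satisfies $MXM=M$, $XMX=X$, $(MX)^*=MX$ and $(XM)^*=XM$.

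Before the verification I would record a short list of auxiliary identities that drive every simplification. Because $A_1$ is nonsingular, $A_1A_1^*$ is positive definite, so $\Delta_A$ is a well-defined Hermitian matrix with $(A_1A_1^*+\Omega_A\Omega_A^*)\Delta_A=\Delta_A(A_1A_1^*+\Omega_A\Omega_A^*)=I_t$. Writing $\overline{Q}_{A_3}=I-A_3^\dag A_3$ for the Hermitian idempotent onto $\Nu(A_3)$, we have $\Omega_A=A_2\overline{Q}_{A_3}$ and $\Omega_A^*=\overline{Q}_{A_3}A_2^*$, whence $\Omega_A\Omega_A^*=A_2\overline{Q}_{A_3}A_2^*=A_2\Omega_A^*$. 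The crucial vanishing relations are
\[
A_3\Omega_A^*=A_3(I-A_3^\dag A_3)A_2^*=0,\qquad \Omega_A A_3^\dag=A_2(I-A_3^\dag A_3)A_3^\dag=0,
\]
which together with $A_3A_3^\dag A_3=A_3$ and $A_3^\dag A_3 A_3^\dag=A_3^\dag$ are what force the interaction between the two block columns to cancel.

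With these in hand the computation is routine block multiplication. Evaluating $MX$, the top-left block collapses to $(A_1A_1^*+\Omega_A\Omega_A^*)\Delta_A=I_t$ after substituting $A_2\Omega_A^*=\Omega_A\Omega_A^*$; the top-right block cancels by the same substitution; and the two lower blocks reduce via $A_3\Omega_A^*=0$, yielding the Hermitian idempotent
\[
MX=\diag(I_t,\,A_3A_3^\dag).
\]
This immediately gives $(MX)^*=MX$, and right-multiplication by $M$ (using $A_3A_3^\dag A_3=A_3$) gives $MXM=M$. A parallel computation yields
\[
XM=\begin{bmatrix} A_1^*\Delta_A A_1 & A_1^*\Delta_A\Omega_A\\ \Omega_A^*\Delta_A A_1 & \Omega_A^*\Delta_A\Omega_A+A_3^\dag A_3\end{bmatrix},
\]
whose Hermitian symmetry is immediate from $\Delta_A^*=\Delta_A$ and $(A_3^\dag A_3)^*=A_3^\dag A_3$, so $(XM)^*=XM$. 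Finally, $XMX=X$ follows by right-multiplying $X$ by $MX=\diag(I_t,A_3A_3^\dag)$ and invoking $A_3^\dag A_3A_3^\dag=A_3^\dag$.

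I do not expect a genuine obstacle here beyond careful bookkeeping of block products; the only substantive points are the defining identity of $\Delta_A$ and the two vanishing relations $A_3\Omega_A^*=0$ and $\Omega_A A_3^\dag=0$. One could alternatively partition $M$ by its two block columns and appeal to Lemma \ref{MP blocks}, but the direct verification above is self-contained and makes the role of $\Delta_A$ and $\Omega_A$ transparent.
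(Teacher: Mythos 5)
Your proof is correct and follows exactly the route the paper indicates: the paper's justification for this lemma is precisely a ``direct calculation using the definition of the Moore--Penrose inverse,'' i.e.\ verification of the four Penrose equations for the displayed block matrix after reducing via unitary invariance, which is what you carry out (and your intermediate products $MX$ and $XM$ correctly reproduce the projectors $P_A$ and $Q_A$ recorded in \eqref{PA and QA}, a useful consistency check).
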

From \eqref{MP of A}, the following expressions can be easily computed for the orthogonal projectors
\begin{equation} \label{PA and QA}
P_A= U\left[\begin{array}{cc}
I_t & 0\\
0 & P_{A_3}
\end{array}\right]U^* ~~\text{and}~~ Q_A=V\left[\begin{array}{cc}
A_1^* \Delta_A A_1 & A_1^* \Delta_A \Omega_A\\
\Omega_A^*\Delta_A  A_1 & Q_{A_3}+ \Omega_A^* \Delta_A \Omega_A
\end{array}\right]V^*.
\end{equation}

\begin{lemma}\cite[Core-EP Decomposition]{Wang} \label{CEPD} Let  $A\in \Cnn$, $\ind(A)=k$, and $\rk(A^k)=t$. Then there exist two matrices $A_1$ and $A_2$ such that $A=A_1+A_2$, where $A_1\in \cmbis$, $A_2^k=0$, $A_1^*A_2=A_2A_1=0$. Further, 
there exists a unitary matrix $U\in \Cnn$ such that
\begin{equation} \label{core EP decomposition}
A=A_1+A_2, \quad A_1 := U\left[\begin{array}{cc}
T & S \\
0 & 0
\end{array}\right]U^*, \quad
A_2 := U\left[\begin{array}{cc}
0 & 0 \\
0 & N
\end{array}\right]U^*,
\end{equation}
where $T\in \C^{t\times t}$ is nonsingular,  and $N\in \C^{(n-t)\times (n-t)}$ is nilpotent of index $k$.
\end{lemma}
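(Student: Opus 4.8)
The plan is to build the unitary $U$ directly from the geometry of $\Ra(A^k)$ and then read off all four required properties from the resulting block form. Since $\ind(A)=k$ forces $\rk(A^{k+1})=\rk(A^k)$, the subspace $\Ra(A^k)$ is $A$-invariant: for any $x$ one has $A(A^k x)=A^{k+1}x\in\Ra(A^{k+1})=\Ra(A^k)$. First I would choose an orthonormal basis of $\Ra(A^k)$, collect its vectors as the columns of $U_1\in\C^{n\times t}$, and extend to a unitary $U=[\,U_1\mid U_2\,]\in\Cnn$. Invariance gives $AU_1=U_1T$ for a unique $T\in\C^{t\times t}$, while $AU_2=U_1S+U_2N$ for some $S$ and some $N\in\C^{(n-t)\times (n-t)}$; hence $U^*AU=\left[\begin{smallmatrix}T&S\\0&N\end{smallmatrix}\right]$ is block upper triangular, which already pins down the normal form in \eqref{core EP decomposition}.

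The substantive part is showing that $T$ is nonsingular and that $N$ is nilpotent of index exactly $k$. For $T$, the restriction $A|_{\Ra(A^k)}$ has image $\Ra(A^{k+1})=\Ra(A^k)$, so it is a surjective, hence bijective, endomorphism of a $t$-dimensional space, and in the chosen basis it is represented by $T$. For $N$, I would raise the triangular form to the $k$-th power to get $U^*A^kU=\left[\begin{smallmatrix}T^k&*\\0&N^k\end{smallmatrix}\right]$ with $T^k$ nonsingular; clearing the off-diagonal block against the invertible pivot $T^k$ by a column operation yields $\rk(A^k)=\rk(T^k)+\rk(N^k)=t+\rk(N^k)$, and comparing with $\rk(A^k)=t$ forces $N^k=0$. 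Applying the same rank identity at exponent $k-1$, together with $\rk(A^{k-1})>\rk(A^k)=t$ (which is precisely what $\ind(A)=k$ means), gives $\rk(N^{k-1})>0$, so the index of $N$ is exactly $k$.

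With the normal form secured I would set $A_1:=U\left[\begin{smallmatrix}T&S\\0&0\end{smallmatrix}\right]U^*$ and $A_2:=U\left[\begin{smallmatrix}0&0\\0&N\end{smallmatrix}\right]U^*$, so that $A=A_1+A_2$ is immediate. The orthogonality relations $A_1^*A_2=0$ and $A_2A_1=0$ follow from a single block multiplication, since the only nonzero blocks of $A_1$ occupy the first block-row while those of $A_2$ sit in the $(2,2)$ position; similarly $A_2^k=U\left[\begin{smallmatrix}0&0\\0&N^k\end{smallmatrix}\right]U^*=0$. Finally $A_1\in\cmbis$, because nonsingularity of $T$ gives $\rk(A_1)=\rk(A_1^2)=t$, i.e.\ $\ind(A_1)\le 1$.

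I expect the only genuine obstacle to be the invariance-and-nonsingularity step: one must verify carefully that $\Ra(A^k)$ is truly $A$-invariant and that the induced endomorphism is bijective, since everything afterward (the block-triangular form, the rank bookkeeping for $N$, and the four verifications) is routine block algebra once $U$ is fixed. An alternative route is Schur triangularization with the eigenvalues ordered so that the $t$ nonzero eigenvalues precede the zero ones; this produces the same block shape but requires separately identifying $t$ with the algebraic multiplicity count of the nonzero spectrum under $\ind(A)=k$, so I would favor the range-based construction above.
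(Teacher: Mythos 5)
Your proof is correct, but there is an important mismatch of expectations: the paper does not prove this lemma at all. It is imported verbatim as a known result from Wang's paper (the reference \cite{Wang} attached to the statement), so there is no internal argument to compare yours against; what you have written is a self-contained proof of a cited result. Judged on its own merits, the argument is complete. The invariance $A\,\Ra(A^k)=\Ra(A^{k+1})=\Ra(A^k)$ (valid since $\rk(A^{k+1})=\rk(A^k)$ and $\Ra(A^{k+1})\subseteq\Ra(A^k)$) legitimately produces the unitary block form $U^*AU=\left[\begin{smallmatrix}T&S\\0&N\end{smallmatrix}\right]$; surjectivity of the restriction $A|_{\Ra(A^k)}$ gives $T$ nonsingular; the rank identity $\rk(A^j)=\rk(T^j)+\rk(N^j)=t+\rk(N^j)$, justified by clearing the off-diagonal block against the invertible pivot $T^j$, applied at $j=k$ and $j=k-1$ gives $N^k=0$ and $N^{k-1}\neq 0$; and the final verifications ($A=A_1+A_2$, $A_1^*A_2=A_2A_1=0$, $A_2^k=0$, and $\rk(A_1)=\rk(A_1^2)=t$ so $A_1\in\cmbis$) are routine block multiplications, exactly as you say.

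Two minor remarks. First, your argument tacitly assumes $k\geq 1$: the step $\rk(A^{k-1})>\rk(A^k)$ needs $A^{k-1}$ to be a genuine lower power, and the blocks $S$, $N$ need $t<n$ (which indeed follows from $k\geq 1$, since $\rk(A^k)=n$ would force $A$ nonsingular); the case $k=0$ is the trivial one with an empty nilpotent block and is worth a sentence. Second, regarding your closing comparison: the range-based construction you chose and the Schur-ordering route you rejected are the two standard ways to obtain this decomposition, and your stated reason for the preference, namely that nonsingularity of $T$ and the exact index of $N$ are read directly off the rank hypotheses $\rk(A^{k-1})>\rk(A^k)=\rk(A^{k+1})$ rather than off a count of nonzero eigenvalues with multiplicity, is a fair one.
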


In \cite{Wang} it was proved that if  $A$ is written as in \eqref{core EP decomposition}, then $A\in \cmbis$ if and only if $N=0$, in which case 
\begin{equation} \label{HS}
 A = U\left[\begin{array}{cc}
T & S \\ 0 & 0 \end{array}\right]U^*,
\end{equation}
and therefore 
\begin{equation} \label{canonical form CEP}
 A^{\core} = U\left[\begin{array}{cc}
T^{-1} & 0 \\ 0 & 0 \end{array}\right]U^*.
\end{equation}

\begin{lemma}\cite{Xu}\label{core triangular 2} Suppose $A=\begin{bmatrix} P & 0\\ Q & R \end{bmatrix}\in \Cnn$ and $P\in \Crr$. Then $A^{\core}$
exists and is in upper triangular block form if and only if $P^{\core}$  and $R^{\core}$ exist and $(I_{n-r}-RR^{\core})Q=0$. In this case,
\[A^{\core}=\begin{bmatrix} P^{\core} & 0\\ -R^{\core}QP^{\core} & R^{\core} \end{bmatrix}.\]
\end{lemma}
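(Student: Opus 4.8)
The plan is to verify the two defining relations of the core inverse recalled in the preliminaries, $AA^{\core}=P_A$ and $\Ra(A^{\core})\subseteq\Ra(A)$, for the explicit matrix, and to run the argument backwards for the converse. I will repeatedly use the two elementary consequences $AA^{\core}A=A$ (from $P_AA=A$) and $A(A^{\core})^2=A^{\core}$ (valid since $\Ra(A^{\core})\subseteq\Ra(A)=\Ra(P_A)$ forces $P_AA^{\core}=A^{\core}$), together with the fact that for any group matrix $M$ one has $MM^{\core}=P_M$ with $\Ra(M^{\core})\subseteq\Ra(M)$, so that $MM^{\core}$ fixes every vector of $\Ra(M)$.

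For sufficiency, suppose $P^{\core}$, $R^{\core}$ exist and $(I_{n-r}-RR^{\core})Q=0$, and put $X=\begin{bmatrix} P^{\core} & 0\\ -R^{\core}QP^{\core} & R^{\core}\end{bmatrix}$. A direct product gives $AX=\begin{bmatrix} PP^{\core} & 0\\ (I_{n-r}-RR^{\core})QP^{\core} & RR^{\core}\end{bmatrix}$, and the hypothesis annihilates the $(2,1)$ entry, leaving $AX=\diag(PP^{\core},RR^{\core})$, a block-diagonal orthogonal projector. Using $PP^{\core}P=P$, $RR^{\core}R=R$ and $RR^{\core}Q=Q$ one checks $AXA=A$, whence $\Ra(AX)=\Ra(A)$ and, $AX$ being an orthogonal projector, $AX=P_A$. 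Since the nonzero blocks of $X$ have columns in $\Ra(P)$ (top) and $\Ra(R)$ (bottom), the projector $P_A$ fixes them, i.e. $P_AX=X$, which is exactly $\Ra(X)\subseteq\Ra(A)$. Both defining relations hold, so $A^{\core}=X$.

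For necessity, assume $A^{\core}$ exists and, partitioned conformably, has the triangular pattern $A^{\core}=\begin{bmatrix} X_1 & 0\\ X_2 & X_3\end{bmatrix}$ of the displayed formula. Then $AA^{\core}=\begin{bmatrix} PX_1 & 0\\ QX_1+RX_2 & RX_3\end{bmatrix}$, and since this equals the Hermitian $P_A$, its $(2,1)$ block vanishes and $AA^{\core}=\diag(PX_1,RX_3)$ is a block-diagonal orthogonal projector; in particular $PX_1$ and $RX_3$ are orthogonal projectors. Reading off $AA^{\core}A=A$ gives $PX_1P=P$, $RX_3Q=Q$, $RX_3R=R$; combined with idempotency, the first and third identify $PX_1$ and $RX_3$ as the orthogonal projectors onto $\Ra(P)$ and $\Ra(R)$. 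The diagonal blocks of $A(A^{\core})^2=A^{\core}$ read $PX_1^2=X_1$ and $RX_3^2=X_3$, hence $(PX_1)X_1=X_1$ and $(RX_3)X_3=X_3$, giving $\Ra(X_1)\subseteq\Ra(P)$ and $\Ra(X_3)\subseteq\Ra(R)$. Thus $X_1,X_3$ fulfil the defining relations of $P^{\core},R^{\core}$, so these exist with $X_1=P^{\core}$, $X_3=R^{\core}$, and $RX_3Q=Q$ becomes $(I_{n-r}-RR^{\core})Q=0$. The sufficiency is now applicable, and uniqueness of the core inverse yields $A^{\core}=\begin{bmatrix} P^{\core} & 0\\ -R^{\core}QP^{\core} & R^{\core}\end{bmatrix}$, so in particular $X_2=-R^{\core}QP^{\core}$ without any direct computation.

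The step I expect to be the crux is securing the range inclusions $\Ra(X_1)\subseteq\Ra(P)$ and $\Ra(X_3)\subseteq\Ra(R)$ in the converse: the projector identities $PX_1P=P$, $RX_3R=R$ by themselves only make $X_1,X_3$ inner inverses, and it is precisely the extra core-inverse identity $A(A^{\core})^2=A^{\core}$ that promotes them to true core inverses. Everything else is bookkeeping on $2\times2$ blocks. (Note that the matrix displayed in the conclusion is block lower triangular, matching $A$, so the phrase ``upper triangular block form'' in the statement is to be read as this common triangular pattern.)
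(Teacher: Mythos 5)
Your proof is correct, but there is no in-paper argument to compare it against: Lemma \ref{core triangular 2} is quoted from \cite{Xu} and stated without proof, so your verification is genuinely new content relative to this paper. Xu's original treatment works in the generality of rings with involution via ring-theoretic characterizations of core invertibility, whereas you give a self-contained complex-matrix argument resting only on the two defining relations $AA^{\core}=P_A$ and $\Ra(A^{\core})\subseteq\Ra(A)$ recalled in the preliminaries; that is exactly the right level of generality for how the lemma is used here. Both directions check out: in the sufficiency part, $AX=\diag(PP^{\core},RR^{\core})$ is a Hermitian idempotent, $AXA=A$ forces $AX=P_A$, and $P_AX=X$ follows from $\Ra(P^{\core})\subseteq\Ra(P)$ and $\Ra(R^{\core})\subseteq\Ra(R)$; in the necessity part, Hermitian-ness of $P_A$ kills the $(2,1)$ block of $AA^{\core}$, and you correctly isolate the crux, namely that $PX_1P=P$ together with idempotency only identifies $PX_1$ as the orthogonal projector onto $\Ra(P)$, and it is the extra identity $A(A^{\core})^2=A^{\core}$ (legitimately derived from $\Ra(A^{\core})\subseteq\Ra(A)$) that yields $\Ra(X_1)\subseteq\Ra(P)$ and hence $X_1=P^{\core}$, and likewise $X_3=R^{\core}$. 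Two small remarks: concluding $A^{\core}=X$ from the two relations tacitly uses uniqueness of the solution of the defining pair, which is part of the Baksalary--Trenkler definition the paper adopts, so it is worth one explicit word; and your closing observation is apt --- the phrase ``upper triangular block form'' in the statement is evidently a slip carried over from Lemma \ref{core triangular}, since the displayed core inverse is block lower triangular, i.e.\ of the same pattern as $A$, and your reading is the intended one.
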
 

\begin{lemma}\cite{Xu}\label{core triangular} Suppose $A=\begin{bmatrix} P & Q\\ 0 & R \end{bmatrix}\in \Cnn$ and $P\in \Crr$. Then $A^{\core}$
exists and is in upper triangular block form if and only if $P^{\core}$  and $R^{\core}$ exist and $(I_r-PP^{\core})Q=0$. In this case,
\[A^{\core}=\begin{bmatrix} P^{\core} & -P^{\core}QR^{\core}\\ 0 & R^{\core} \end{bmatrix}.\]
\end{lemma}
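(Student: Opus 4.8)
The plan is to deduce this upper block-triangular statement from the already-established lower block-triangular version, Lemma~\ref{core triangular 2}, by conjugating $A$ with the permutation that interchanges the two coordinate blocks. Let $\Pi\in\Cnn$ be the permutation matrix that carries the last $n-r$ coordinates to the first $n-r$ positions and the first $r$ coordinates to the last $r$ positions; being a permutation matrix it is unitary, with $\Pi^*=\Pi^{-1}$. Reading off the blocks of $A$ in the permuted basis gives
\begin{equation*}
B:=\Pi A\Pi^*=\begin{bmatrix} R & 0\\ Q & P\end{bmatrix},
\end{equation*}
so that conjugation by $\Pi$ turns the upper block-triangular $A$ into a lower block-triangular matrix whose diagonal blocks $P$ and $R$ are swapped, with the off-diagonal block $Q$ carried from the $(1,2)$ to the $(2,1)$ position.

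The enabling observation is that the core inverse is compatible with unitary similarity. If $U$ is unitary, then $UA^{\core}U^*$ satisfies the two Baksalary--Trenkler conditions for $UAU^*$: indeed $UAU^*\cdot UA^{\core}U^*=U(AA^{\core})U^*=UP_AU^*=P_{UAU^*}$ (the last equality because the Moore--Penrose inverse commutes with unitary similarity), and $\Ra(UA^{\core}U^*)=U\,\Ra(A^{\core})\subseteq U\,\Ra(A)=\Ra(UAU^*)$; by uniqueness $(UAU^*)^{\core}=UA^{\core}U^*$. Hence $A^{\core}$ exists if and only if $B^{\core}$ exists, in which case $A^{\core}=\Pi^*B^{\core}\Pi$. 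Moreover, since $\Pi$ swaps the two diagonal blocks, $A^{\core}$ is upper block-triangular precisely when $B^{\core}$ is lower block-triangular.

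It remains to apply Lemma~\ref{core triangular 2} to $B$, reading its top-left block $R$ (of size $(n-r)\times(n-r)$) as the ``$P$'' of that lemma and its bottom-right block $P$ as the ``$R$''. The criterion for $B^{\core}$ to exist and be lower block-triangular then reads: $R^{\core}$ and $P^{\core}$ exist and $(I_r-PP^{\core})Q=0$, and in that case
\begin{equation*}
B^{\core}=\begin{bmatrix} R^{\core} & 0\\ -P^{\core}QR^{\core} & P^{\core}\end{bmatrix}.
\end{equation*}
Conjugating back yields $A^{\core}=\Pi^*B^{\core}\Pi=\begin{bmatrix} P^{\core} & -P^{\core}QR^{\core}\\ 0 & R^{\core}\end{bmatrix}$, which is the asserted formula, and the transported criterion is exactly the one claimed. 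Since Lemma~\ref{core triangular 2} is an equivalence, both implications follow at once.

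The main care-point is the bookkeeping of block sizes under the swap: one must check that the identity appearing in Lemma~\ref{core triangular 2} is $I_{n-r}$ for the block $R$ but becomes $I_r$ once that lemma is used with $P$ in the role of its lower-right block, so the transported condition is $(I_r-PP^{\core})Q=0$. As a self-contained alternative one can verify the formula directly: setting $X$ equal to the claimed matrix, the hypothesis $(I_r-PP^{\core})Q=0$ says $\Ra(Q)\subseteq\Ra(P)$, whence $\Ra(A)=\Ra(P)\oplus\Ra(R)$ is an orthogonal decomposition; then the off-diagonal block of $AX$ collapses and $AX=\diag(PP^{\core},RR^{\core})=P_A$, while $\Ra(X)\subseteq\Ra(P^{\core})\oplus\Ra(R^{\core})\subseteq\Ra(A)$, so $X=A^{\core}$ by uniqueness. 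The harder converse, namely that upper-triangularity of $A^{\core}$ forces the existence of $P^{\core}$ and $R^{\core}$ and the range condition, is precisely what the permutation reduction supplies for free from the ``only if'' part of Lemma~\ref{core triangular 2}.
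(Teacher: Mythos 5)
Your proof is correct, but there is no in-paper argument to compare it against: Lemma \ref{core triangular} is quoted from the reference \cite{Xu} without proof, as is its lower-triangular companion Lemma \ref{core triangular 2}. Granting Lemma \ref{core triangular 2}, your reduction is a legitimate and economical route. The key step — that the core inverse respects unitary similarity, $(UAU^*)^{\core}=UA^{\core}U^*$, existence included — is verified correctly from the two defining conditions, and the bookkeeping checks out: $\Pi A\Pi^*=\begin{bmatrix} R & 0\\ Q & P \end{bmatrix}$, the condition of Lemma \ref{core triangular 2} transported to this matrix is exactly $(I_r-PP^{\core})Q=0$ (the identity block becomes $I_r$ because $P$ now sits in the lower-right corner), and conjugating the resulting formula back yields the asserted $A^{\core}$; since the cited lemma is an equivalence, both directions transfer at once. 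Your appended direct verification of sufficiency is also sound: $(I_r-PP^{\core})Q=0$ gives $\Ra(Q)\subseteq\Ra(P)$, hence $AX=\diag(PP^{\core},RR^{\core})=P_A$ and $\Ra(X)\subseteq\Ra(A)$, so $X=A^{\core}$ by uniqueness (though your ``$\Ra(A)=\Ra(P)\oplus\Ra(R)$'' should be read as the direct sum of the block-embedded subspaces $\Ra(P)\times\{0\}$ and $\{0\}\times\Ra(R)$). What your approach buys is that the two triangular lemmas are formally equivalent under a block-swap, so only one of them needs an independent proof, whereas the source proves such statements directly. One caveat you handle only tacitly: as transcribed in the paper, Lemma \ref{core triangular 2} says ``$A^{\core}$ exists and is in upper triangular block form'' while displaying a lower block-triangular formula for $A^{\core}$; your argument adopts the lower-triangular reading, which is the only one consistent with that displayed formula, and this interpretive choice should be stated explicitly since your whole reduction rests on it.
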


\section{One-sided $*$-orthogonality}

In this section we develop canonical forms of the  right (left) $*$-orthogonal matrices and using these  canonical forms we give characterizations of parallel sums. Also, some new results on the Moore-Penrose inverse of the sum of two matrices are presented. 

\subsection{Left and right $*$-orthogonal matrices and canonical forms}

Recall that for any  $A,B \in \Cm$, the $*$-orthogonality \cite{Hes} can be defined as
\begin{equation} \label{star orthogonality}
A\perp_{*} B  \quad \Leftrightarrow \quad A^*B=0 \quad \text{and} \quad BA^*=0.
\end{equation}

What happens if one considers only one of the two condition. It turns out to be an interesting notion of one sided $*$-orthogonality. We formally introduce the following definition:

\begin{definition}\label{left and right *orthogonal} Let $A,B \in \Cm$. It is said that:
\begin{enumerate}[\rm (a)]
\item $A$ is left  $*$-orthogonal to $B$ and denoted by $A\perp_{*,l} B$ if  $A^*B=0$.
\item $A$ is right  $*$-orthogonal to $B$ and denoted by $A\perp_{*,r} B$ if  $BA^*=0$.
\end{enumerate}
\end{definition}

\begin{remark}{\rm Note that $A\perp_{*,l} B$ if and only if $B\perp_{*,l} A$, that is,  the concept of left $*$-orthogonality possesses a symmetry property. Similarly, the right $*$-orthogonality is symmetric.}
\end{remark}

Some trivial consequences of Definition \ref{left and right *orthogonal} and \eqref{star orthogonality} is the proposition below.

\begin{proposition}\label{trivial 1} Let $A,B\in \Cm$. Then the following hold:
\begin{enumerate}[(i)]
\item $A\perp_{*,l}B$ if and only if $B^* \perp_{*,r} A^*$.
\item $A\perp_{*}B$ if and only if  $A\perp_{*,l}B$ and  $A\perp_{*,r}B$.
\end{enumerate}
\end{proposition}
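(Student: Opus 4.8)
The plan is to verify both equivalences by directly unwinding Definition \ref{left and right *orthogonal} together with the defining condition \eqref{star orthogonality}; no nontrivial computation is involved, and the only point requiring a little care is keeping track of which matrix plays the role of the first and which the second argument once conjugate transposes are introduced.

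For part (i), I would begin from the left-hand side: by Definition \ref{left and right *orthogonal}(a), $A\perp_{*,l}B$ means exactly $A^*B=0$. On the other side, applying Definition \ref{left and right *orthogonal}(b) with $B^*$ and $A^*$ in place of $A$ and $B$ respectively, the statement $B^*\perp_{*,r}A^*$ means $(A^*)(B^*)^*=0$. Since $(B^*)^*=B$, this condition reads $A^*B=0$, which is identical to the condition on the left-hand side. Hence the two assertions amount to the same equation and the equivalence is immediate.

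For part (ii), I would appeal to \eqref{star orthogonality}, according to which $A\perp_{*}B$ holds precisely when both $A^*B=0$ and $BA^*=0$ hold. By Definition \ref{left and right *orthogonal}(a) the first equality is exactly $A\perp_{*,l}B$, and by Definition \ref{left and right *orthogonal}(b) the second is exactly $A\perp_{*,r}B$. Therefore $A\perp_{*}B$ is equivalent to the conjunction of $A\perp_{*,l}B$ and $A\perp_{*,r}B$, as claimed.

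Since each part reduces to a tautological rewriting of the definitions, there is no genuine obstacle here. The only thing to watch is the bookkeeping of conjugate transposes in part (i), where one must recall that $(B^*)^*=B$ so that the right $*$-orthogonality condition imposed on $B^*$ and $A^*$ collapses back to the left $*$-orthogonality condition on $A$ and $B$.
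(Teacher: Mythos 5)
Your proof is correct and follows exactly the route the paper intends: the paper states this proposition without proof, calling it a trivial consequence of Definition \ref{left and right *orthogonal} and \eqref{star orthogonality}, and your direct unwinding of those definitions (including the careful substitution $(B^*)^*=B$ in part (i)) is precisely that argument written out. Nothing is missing.
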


It is well known that the Moore-Penrose inverse satisfies the following properties:
\begin{equation}\label{range kernel MP}
\Ra(A^{\dag})= \Ra(A^*)~~\text{and}~~ \Nu(A^{\dag})=\Nu(A^*).
\end{equation}
In view of \eqref{range kernel MP}  and the well-know equivalence $AB=0 \Leftrightarrow \Ra(B)\subseteq \Nu(A)$, it is seen that the left $*$-orthogonality can be rewritten as $A^\dag B=0$, whereas  the right $*$-orthogonality is equivalent to $BA^\dag=0$. In consequence, one can obtain an alternative definition of the left (resp., right) $*$-orthogonality between $A,B\in \Cm$, namely
\begin{eqnarray*} 
A\perp_{*,l} B & \Leftrightarrow & A^\dag B=0  ~~\Leftrightarrow~~ B^\dag A=0, \\
A\perp_{*,r} B & \Leftrightarrow & B A^\dag=0 ~~\Leftrightarrow~~  A B^\dag =0.
\end{eqnarray*}

\begin{theorem} Let $A,B \in \Cm$. Then $A$ is left (resp. right)  $*$-orthogonal to $B$ if and only if $\Ra(A)\perp \Ra(B)$ (resp. $\Ra(A^*)\perp \Ra(B^*)$).
\end{theorem}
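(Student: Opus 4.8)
The plan is to prove the left-orthogonality equivalence first and then obtain the right-orthogonality equivalence by passing to conjugate transposes. Both directions of the left statement are essentially a restatement of the fundamental range–kernel duality already recalled in the excerpt, so the argument should be short; the only care needed is in tracking conjugate transposes when transferring to the right case.

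First I would handle the left case. By Definition \ref{left and right *orthogonal}(a), $A\perp_{*,l}B$ means exactly $A^*B=0$. Using the equivalence $MN=0 \Leftrightarrow \Ra(N)\subseteq \Nu(M)$ recalled above (with $M=A^*$ and $N=B$), this is equivalent to $\Ra(B)\subseteq \Nu(A^*)$. Since $\Nu(A^*)=\Ra(A)^\perp$ (the subspace onto which $\overline{P}_A$ projects), the inclusion $\Ra(B)\subseteq \Ra(A)^\perp$ is precisely the statement $\Ra(A)\perp \Ra(B)$. Chaining these equivalences yields $A\perp_{*,l}B \Leftrightarrow \Ra(A)\perp \Ra(B)$.

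Next I would deduce the right case from the left one. By Definition \ref{left and right *orthogonal}(b), $A\perp_{*,r}B$ means $BA^*=0$; taking conjugate transposes, this holds if and only if $AB^*=0$. Applying the left-case argument to the pair $A^*,B^*$, and writing $AB^*=(A^*)^*B^*$, the computation above (with $A$ replaced by $A^*$ and $B$ by $B^*$) gives $AB^*=0 \Leftrightarrow \Ra(A^*)\perp \Ra(B^*)$. Equivalently, one may invoke Proposition \ref{trivial 1}(i) together with the symmetry of right $*$-orthogonality noted in the Remark. Either way, $A\perp_{*,r}B \Leftrightarrow \Ra(A^*)\perp \Ra(B^*)$.

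I do not expect any genuine obstacle here: the result follows directly from the identities $\Nu(A^*)=\Ra(A)^\perp$ and $\Nu(A)=\Ra(A^*)^\perp$ and the $MN=0 \Leftrightarrow \Ra(N)\subseteq \Nu(M)$ criterion. If one prefers a fully self-contained derivation avoiding these named facts, the equivalence $A^*B=0 \Leftrightarrow \Ra(A)\perp \Ra(B)$ can instead be established entrywise, by observing that $\Ra(A)\perp \Ra(B)$ means $(Ax)^*(By)=x^*(A^*B)y=0$ for all $x,y\in\C^n$, which vanishes identically if and only if $A^*B=0$ (let $x,y$ range over the standard basis vectors). The only point to watch throughout is the placement of the conjugate transpose, since it is what interchanges the roles of $\Ra(\cdot)$ and $\Ra(\cdot^*)$ between the left and right cases.
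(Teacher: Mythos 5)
Your proof is correct and follows essentially the same route as the paper: the equivalence $A^*B=0 \Leftrightarrow \Ra(B)\subseteq \Nu(A^*)=\Ra(A)^\perp$ for the left case, with the right case handled "similarly" (the paper leaves it implicit; your reduction via conjugate transposes is a perfectly valid way to make that explicit).
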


\begin{proof}
Note that $A\perp_{*,l} B$ if  $A^*B=0$, which is equivalent to $\Ra(B)\subseteq \Nu(A^*)=\Ra(A)^\perp$. Thus, $\Ra(A)\perp \Ra(B)$. 
Similarly, $A\perp_{*,r} B$ holds if and only if $\Ra(A^*)\perp \Ra(B^*)$.
\end{proof}

Now, a simultaneous form of a pair of  left (resp., right) $*$-orthogonal matrices is developed. The main tool is the Singular Value Decomposition (SVD).

\begin{theorem}\label{svd} Let $A\in \Cm$ be a  matrix of rank $r>0$ and let $\sigma_1\ge\sigma_2\ge\cdots \ge \sigma_r>0$ be the singular values of $A$.  Then there exist unitary matrices $U\in \Cmm$ and $V\in \Cnn$ such that
\begin{equation}\label{svd A}
A=U\begin{bmatrix}                                                                              \Sigma & 0 \\
0 & 0
\end{bmatrix}V^*,
\end{equation}
where $\Sigma=diag(\sigma_1, \sigma_2, \hdots, \sigma_r)$. In particular, the Moore-Penrose inverse of $A$ is given by
\begin{equation}\label{MP respect SVD}
A^\dag=V\begin{bmatrix}                                                                              \Sigma^{-1} & 0 \\
0 & 0
\end{bmatrix}U^*.
\end{equation}
\end{theorem}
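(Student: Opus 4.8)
The plan is to use the classical route to the SVD via the spectral theorem applied to the Hermitian positive semidefinite matrix $A^*A \in \Cnn$. First I would note that $A^*A$ is Hermitian and positive semidefinite, so all its eigenvalues are real and nonnegative, and $\rk(A^*A)=\rk(A)=r$ since $\Nu(A^*A)=\Nu(A)$. Hence $A^*A$ has exactly $r$ positive eigenvalues, which by the definition of the singular values as the nonnegative square roots of the eigenvalues of $A^*A$ are precisely $\sigma_1^2\ge\cdots\ge\sigma_r^2>0$. By the spectral theorem there is a unitary $V\in\Cnn$ diagonalizing $A^*A$; writing $V=[V_1\ V_2]$ with $V_1\in\C^{n\times r}$ collecting the eigenvectors for the positive eigenvalues in nonincreasing order, this yields $V_1^*A^*AV_1=\Sigma^2$ and $V_2^*A^*AV_2=0$. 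The latter reads $(AV_2)^*(AV_2)=0$ and therefore forces $AV_2=0$.

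Next I would manufacture the left factor $U$. Setting $U_1:=AV_1\Sigma^{-1}\in\C^{m\times r}$, a direct computation gives $U_1^*U_1=\Sigma^{-1}V_1^*A^*AV_1\Sigma^{-1}=\Sigma^{-1}\Sigma^2\Sigma^{-1}=I_r$, so $U_1$ has orthonormal columns. I would then extend $U_1$ to a unitary matrix $U=[U_1\ U_2]\in\Cmm$ by appending an orthonormal basis $U_2\in\C^{m\times(m-r)}$ of $\Ra(U_1)^\perp$, so that $U_2^*U_1=0$. Computing the four blocks of $U^*AV$ with the help of $AV_2=0$, of $U_1^*AV_1=\Sigma^{-1}V_1^*A^*AV_1=\Sigma$, and of $U_2^*AV_1=U_2^*U_1\Sigma=0$, produces exactly the block form in \eqref{svd A}, whence $A=U\left[\begin{smallmatrix}\Sigma&0\\0&0\end{smallmatrix}\right]V^*$.

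For the second assertion, I would simply verify that the candidate $G:=V\left[\begin{smallmatrix}\Sigma^{-1}&0\\0&0\end{smallmatrix}\right]U^*$ satisfies the four defining Penrose equations and then invoke their uniqueness. Since $U$ and $V$ are unitary, one gets $AG=U\left[\begin{smallmatrix}I_r&0\\0&0\end{smallmatrix}\right]U^*$ and $GA=V\left[\begin{smallmatrix}I_r&0\\0&0\end{smallmatrix}\right]V^*$, both visibly Hermitian, while $AGA=A$ and $GAG=G$ drop out of the scalar identities $\left[\begin{smallmatrix}\Sigma&0\\0&0\end{smallmatrix}\right]\left[\begin{smallmatrix}\Sigma^{-1}&0\\0&0\end{smallmatrix}\right]\left[\begin{smallmatrix}\Sigma&0\\0&0\end{smallmatrix}\right]=\left[\begin{smallmatrix}\Sigma&0\\0&0\end{smallmatrix}\right]$ and its analogue for $G$.

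The argument is entirely routine; the only points demanding care are the bookkeeping that ties the eigenvalues produced by the spectral theorem to the prescribed singular values $\sigma_i$ in the correct nonincreasing order, and the verification that $U_1$ has orthonormal columns so that it may be completed to a genuine unitary $U$. Everything else reduces to mechanical block multiplication.
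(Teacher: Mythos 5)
Your proof is correct. Note, however, that the paper gives no proof of this statement at all: Theorem \ref{svd} is simply the classical Singular Value Decomposition, together with the standard expression \eqref{MP respect SVD} for the Moore--Penrose inverse, recalled as a known tool before it is used in Theorems \ref{right canonical form} and \ref{left canonical form}. Your argument --- diagonalizing the Hermitian positive semidefinite matrix $A^*A$ by the spectral theorem, observing that $\Nu(A^*A)=\Nu(A)$ ties the rank and the positive eigenvalues $\sigma_i^2$ to the prescribed singular values, constructing $U_1=AV_1\Sigma^{-1}$ with orthonormal columns, completing it to a unitary $U$, and finally verifying the four Penrose equations for the candidate $V\left[\begin{smallmatrix}\Sigma^{-1}&0\\0&0\end{smallmatrix}\right]U^*$ and invoking uniqueness --- is the standard textbook derivation, and it is complete: the key steps ($AV_2=0$ from $(AV_2)^*(AV_2)=0$, and $U_2^*AV_1=U_2^*U_1\Sigma=0$) are all justified. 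So your proposal supplies, correctly, exactly what the paper takes for granted.
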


The following result provides a canonical form of two right $*$-orthogonal matrices.

\begin{theorem}\label{right canonical form} Let $A, B\in \Cm$ be such that $r=\rank(A)$. Then the following are equivalent:
\begin{enumerate}[(i)]
\item $A\perp_{*,r} B$;
\item There exist unitary matrices $U\in \Cmm$ and $V\in \Cnn$, and a nonsingular  matrix $\Sigma$ such that
\begin{equation}\label{A and B right}
 A=U\begin{bmatrix}
 \Sigma & 0 \\
  0 & 0
 \end{bmatrix}V^*, \quad B= U\begin{bmatrix}
               0 & B_2 \\
               0 & B_4
              \end{bmatrix} V^*,
\end{equation}
where $B_2\in \C^{r\times (n-r)}$ and $B_4\in \C^{(m-r)\times (n-r)}$.
\end{enumerate}
In this case,
\begin{equation}\label{right B+}
 B^\dag = V\begin{bmatrix}
       0 & 0 \\
(B_2^*B_2+B_4^*B_4)^\dag B_2^* & (B_2^*B_2+B_4^*B_4)^\dag B_4^*
\end{bmatrix}U^*.
\end{equation}
\end{theorem}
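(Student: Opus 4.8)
The plan is to run the equivalence as a two-way argument: prove (ii) $\Rightarrow$ (i) by a direct block computation, and prove (i) $\Rightarrow$ (ii) by first reducing $A$ to its singular value form and then reading off the block structure of $B$ from the orthogonality condition. The closing formula for $B^\dag$ will then drop out of Lemma \ref{MP blocks}(a) once the block shape of $B$ is known. Throughout I would use the characterization $A\perp_{*,r}B \Leftrightarrow BA^*=0$ from Definition \ref{left and right *orthogonal}(b).

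For (ii) $\Rightarrow$ (i), I would simply evaluate $BA^*$. Since $A^* = V\begin{bmatrix} \Sigma^* & 0 \\ 0 & 0 \end{bmatrix}U^*$ and $V^*V = I_n$, the product $BA^*$ collapses to $U$ times the block product $\begin{bmatrix} 0 & B_2 \\ 0 & B_4 \end{bmatrix}\begin{bmatrix} \Sigma^* & 0 \\ 0 & 0 \end{bmatrix}$ times $U^*$, and every block of that product vanishes because the first block-column of $B$ is zero. Hence $BA^* = 0$, i.e. $A\perp_{*,r}B$.

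For the substantive direction (i) $\Rightarrow$ (ii), I would invoke Theorem \ref{svd} to obtain unitary $U, V$ with $A = U\begin{bmatrix} \Sigma & 0 \\ 0 & 0 \end{bmatrix}V^*$ and $\Sigma = \diag(\sigma_1,\dots,\sigma_r)$ nonsingular. In these coordinates I write $B = U\begin{bmatrix} B_1 & B_2 \\ B_3 & B_4 \end{bmatrix}V^*$ with conforming blocks ($B_1\in\C^{r\times r}$, $B_2\in\C^{r\times(n-r)}$, $B_3\in\C^{(m-r)\times r}$, $B_4\in\C^{(m-r)\times(n-r)}$). A short computation gives $BA^* = U\begin{bmatrix} B_1\Sigma^* & 0 \\ B_3\Sigma^* & 0 \end{bmatrix}U^*$, so the hypothesis $BA^* = 0$ forces $B_1\Sigma^* = 0$ and $B_3\Sigma^* = 0$. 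Because $\Sigma$, and hence $\Sigma^*$, is invertible, this yields $B_1 = 0$ and $B_3 = 0$, which is exactly the claimed form of $B$.

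Finally, for the pseudoinverse, the unitarity of $U$ and $V$ lets me factor $(\cdot)^\dag$ through them, giving $B^\dag = V\begin{bmatrix} 0 & B_2 \\ 0 & B_4 \end{bmatrix}^\dag U^*$; applying Lemma \ref{MP blocks}(a) with $P = B_2$ and $Q = B_4$ reproduces the displayed expression \eqref{right B+} verbatim. The proof is essentially bookkeeping with compatible block partitions, so I do not expect a genuine obstacle; the only points demanding care are keeping the partitions of $A$, $A^*$, $B$ and $\Sigma$ consistent, and recording explicitly that passing $(\cdot)^\dag$ across the unitary factors is legitimate precisely because $U^\dag = U^*$ and $V^\dag = V^*$.
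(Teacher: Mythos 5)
Your proof is correct and takes essentially the same route as the paper's: the SVD of $A$ via Theorem \ref{svd}, a conforming block partition of $B$ whose first block column is forced to vanish by $BA^*=0$ together with the invertibility of $\Sigma$, and Lemma \ref{MP blocks}(a) to produce \eqref{right B+}. The only difference is cosmetic: you spell out that $(\cdot)^\dag$ passes through the unitary factors $U$ and $V$, a step the paper leaves implicit.
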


\begin{proof}
(i) $\Rightarrow$ (ii). By Theorem \ref{svd}, it is clear that  $A$ can be written as in \eqref{svd A}.
Let $B$ be partitioned as
$B=U \begin{bmatrix}
 B_1 & B_2 \\
 B_3 & B_4
 \end{bmatrix}V^*$,
in conformation with the partition of $A$.  \\
Since  $A\perp_{*,r} B$, i.e., $BA^*=0$, direct calculations show that  $B_1\Sigma^*=0$ and $B_3\Sigma^*=0$. So, the nonsingularity of $\Sigma$ implies $B_1=0$ and $B_3=0$. \\
(ii) $\Rightarrow$ (i). By using \eqref{A and B right}, it is easy to check that $BA^*=0$, i.e., $A\perp_{*,r} B$. \\
Finally,  from \eqref{A and B right} and Lemma \ref{MP blocks}, it follows that \eqref{right B+} holds.
\end{proof}

A similar result for left $*$-orthogonal matrices holds.

\begin{theorem}\label{left canonical form}
Let $A, B\in \Cm$ be such that $r=\rank(A)$. Then the following are equivalent:
\begin{enumerate}[(i)]
\item $A\perp_{*,l} B$;
\item There exist unitary matrices $U\in \Cmm$ and $V\in \Cnn$, and a nonsingular  matrix $\Sigma$ such that
\begin{equation}\label{B respect to A.}
 A=U\begin{bmatrix}
 \Sigma & 0 \\
  0 & 0
 \end{bmatrix}V^*, \quad B=U\begin{bmatrix}
               0 & 0 \\
               B_3 & B_4
              \end{bmatrix} V^*,
\end{equation}
where $B_3\in \C^{(m-r)\times r}$ and $B_4\in \C^{(m-r)\times (n-r)}$.
\end{enumerate}
In this case,
\begin{equation*}\label{left B+}
 B^\dag = V \begin{bmatrix}
                         0 & B_3^*(B_3B_3^*+B_4B_4^*)^\dag  \\
      0 & B_4^*(B_3B_3^*+B_4B_4^*)^\dag
              \end{bmatrix} U^*.
\end{equation*}
\end{theorem}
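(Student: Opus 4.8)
The plan is to run the proof of Theorem \ref{right canonical form} essentially verbatim, interchanging the roles of rows and columns: the right-sided condition $BA^*=0$ is replaced by the left-sided condition $A^*B=0$, and part (b) of Lemma \ref{MP blocks} is used in place of part (a).

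For the implication (i) $\Rightarrow$ (ii), I would first invoke Theorem \ref{svd} to write $A$ in its SVD form \eqref{svd A} with $\Sigma=\diag(\sigma_1,\dots,\sigma_r)$ nonsingular, and partition $B=U\left[\begin{smallmatrix} B_1 & B_2 \\ B_3 & B_4 \end{smallmatrix}\right]V^*$ conformably with $A$. Since $A^*=V\left[\begin{smallmatrix} \Sigma^* & 0 \\ 0 & 0 \end{smallmatrix}\right]U^*$, the hypothesis $A\perp_{*,l}B$, i.e. $A^*B=0$, reduces, after cancelling the unitary factors, to $\Sigma^*B_1=0$ and $\Sigma^*B_2=0$; the nonsingularity of $\Sigma^*$ then forces $B_1=0$ and $B_2=0$, which is precisely the form \eqref{B respect to A.}. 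The reverse implication (ii) $\Rightarrow$ (i) is immediate: substituting the canonical forms and multiplying out gives $A^*B=0$, that is, $A\perp_{*,l}B$.

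Finally, for the expression of $B^\dag$, the unitary invariance $B^\dag=V\left[\begin{smallmatrix} 0 & 0 \\ B_3 & B_4 \end{smallmatrix}\right]^\dag U^*$ reduces the computation to the Moore-Penrose inverse of the block matrix $\left[\begin{smallmatrix} 0 & 0 \\ B_3 & B_4 \end{smallmatrix}\right]$, which is supplied directly by Lemma \ref{MP blocks}(b) with $P=B_3$ and $Q=B_4$. There is no genuine obstacle here, as the argument is the mirror image of the right-sided case; the only points demanding care are selecting part (b) rather than part (a) of Lemma \ref{MP blocks} (the nonzero blocks of $B$ now lie in the bottom row) and keeping the outer unitary factors in the correct order $V(\cdot)U^*$ when passing to $B^\dag$. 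Alternatively, one could bypass the repetition entirely by applying Theorem \ref{right canonical form} to the pair $(A^*,B^*)$: by Proposition \ref{trivial 1}(i) together with the symmetry of right $*$-orthogonality one has $A\perp_{*,l}B$ iff $A^*\perp_{*,r}B^*$, and conjugate-transposing the canonical forms thus obtained (noting $\rank(A^*)=r$) yields \eqref{B respect to A.} after relabelling the unitary matrices and setting $\Sigma$, $B_3$, $B_4$ appropriately.
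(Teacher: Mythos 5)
Your proposal is correct and is essentially the proof the paper intends: the paper omits the argument entirely (stating only that ``a similar result for left $*$-orthogonal matrices holds''), and the intended proof is exactly your mirrored version of Theorem \ref{right canonical form} --- SVD of $A$, conformable partition of $B$, cancellation by nonsingularity of $\Sigma$ to kill $B_1$ and $B_2$, and Lemma \ref{MP blocks}(b) for the formula for $B^\dag$. Your alternative route via Proposition \ref{trivial 1}(i), applying Theorem \ref{right canonical form} to $(A^*,B^*)$ and conjugate-transposing, is also valid and dimensionally consistent.
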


\subsection{Left and right $*$-orthogonality and parallel sum}

The earliest most definition of parallel summable matrices is for Hermitian semidefinite matrices \cite{AnDu}, but we choose to define it for rectangular matrices from usage point of view.

\begin{definition}\label{definition PS} Let $A,B\in \Cm$ and at least one of $A$ and $B$ are non-null. Then  $A$ and $B$ are said to be parallel summable (or, in short, p.s.) if $A(A+B)^{-}B$ is invariant under any choice of  $(A+B)^{-}$.
When this is so, the common value of $A(A+B)^{-}B$ is called the parallel sum of $A$ and $B$, and  denoted by $A:B$.
\end{definition}

\begin{remark}\label{remark PS1}{\rm Note that if $A$ and $B$ are parallel summable then
 $A:B=A(A+B)^\dag B.$ }
\end{remark}

A well-known characterization of parallel summable matrices is established  below.

\begin{lemma}\label{P2} \cite{MiOd, MiBhMa} Let $A, B \in \Cm$.  Then the following are equivalent:
\begin{enumerate}[{\rm(i)}]
\item $A$ and $B$ are parallel summable;
\item $\Ra(A)\subseteq \Ra(A+B)$ and $\Ra(A^*) \subseteq \Ra(A^*+B^*)$.
\end{enumerate}
\end{lemma}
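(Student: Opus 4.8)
The plan is to reduce the statement to the standard fact that, for fixed matrices $M,P,Q$ of compatible sizes, the product $PM^-Q$ is independent of the choice of inner inverse $M^-\in M\{1\}$ precisely when $\Ra(Q)\subseteq\Ra(M)$ and $\Ra(P^*)\subseteq\Ra(M^*)$. Applying this with $M=A+B$, $P=A$, $Q=B$ gives the equivalence, once I record two elementary range identities: first, $\Ra((A+B)^*)=\Ra(A^*+B^*)$; second, since $A=(A+B)-B$ and $B=(A+B)-A$, the containment $\Ra(B)\subseteq\Ra(A+B)$ holds if and only if $\Ra(A)\subseteq\Ra(A+B)$. Together these convert the pair $\Ra(B)\subseteq\Ra(A+B)$, $\Ra(A^*)\subseteq\Ra((A+B)^*)$ into exactly condition (ii).

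For the implication (ii)$\Rightarrow$(i) I would argue directly, without parametrizing inner inverses. Write $M=A+B$. The hypotheses give $AM^-M=A$ and $MM^-B=B$ for \emph{every} $M^-\in M\{1\}$: these are the usual translations of $\Ra(A^*)\subseteq\Ra(M^*)$ and $\Ra(B)\subseteq\Ra(M)$, each valid for any inner inverse because $MM^-M=M$ forces $x-M^-Mx\in\Nu(M)$ and $MM^-$ to fix $\Ra(M)$. Then for any two inner inverses $G_1,G_2$ one computes $AG_1B=AG_1(MG_2B)=(AG_1M)G_2B=AG_2B$, so $A(A+B)^-B$ is invariant and $A,B$ are parallel summable; by Remark~\ref{remark PS1} the common value equals $A(A+B)^\dag B$.

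The converse (i)$\Rightarrow$(ii) is the part requiring more care, and I expect it to be the main obstacle. Here I would fix one inner inverse $G$ and invoke the parametrization $(A+B)\{1\}=\{\,G+(I_n-GM)U+V(I_m-MG):U,V\,\}$ of all inner inverses of $M=A+B$. Substituting into $A(A+B)^-B$ produces $AGB+A(I_n-GM)UB+AV(I_m-MG)B$, so invariance over all $U,V$ forces the two bilinear terms $A(I_n-GM)UB$ and $AV(I_m-MG)B$ to vanish identically. Provided $A\neq0$ and $B\neq0$, a rank-one choice of the free parameter shows that $CUB\equiv0$ (resp. $AVD\equiv0$) is possible only when the flanking factor is zero, giving $A(I_n-GM)=0$ and $(I_m-MG)B=0$; these read $AM^-M=A$ and $MM^-B=B$, i.e.\ $\Ra(A^*)\subseteq\Ra(M^*)$ and $\Ra(B)\subseteq\Ra(M)$. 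The degenerate cases $A=0$ or $B=0$ must be handled separately, but there both (i) and (ii) hold trivially (the product is the zero matrix and the range containments are vacuous), so the standing assumption that at least one of $A,B$ is non-null causes no trouble. Combining the two directions with the range identities above then completes the argument.
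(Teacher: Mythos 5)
Your proposal is correct, but there is nothing in the paper to compare it against line by line: the paper does not prove Lemma~\ref{P2} at all, it simply cites it from \cite{MiOd, MiBhMa} as a known characterization of parallel summability. What you have written is essentially the classical Rao--Mitra invariance argument, reconstructed in full, and every step checks out. The reduction via the two range identities ($\Ra((A+B)^*)=\Ra(A^*+B^*)$ trivially, and $\Ra(A)\subseteq\Ra(A+B)\Leftrightarrow\Ra(B)\subseteq\Ra(A+B)$ since each of $A,B$ is a difference of $A+B$ and the other) is exactly what is needed to pass between the symmetric-looking condition (ii) and the asymmetric invariance criterion for $P M^- Q$ with $P=A$, $Q=B$, $M=A+B$. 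Your direct proof of (ii)$\Rightarrow$(i) via the telescoping identity $AG_1B=AG_1(MG_2B)=(AG_1M)G_2B=AG_2B$ is clean and avoids the parametrization entirely. For (i)$\Rightarrow$(ii), the parametrization $G+(I_n-GM)U+V(I_m-MG)$ is a valid family of inner inverses (and in fact exhausts $M\{1\}$, though your argument only needs that each member is an inner inverse), and the rank-one choice $U=e_je_k^*$ correctly forces the flanking factors to vanish, since an outer product of nonzero vectors is nonzero; this is also where the hypotheses $A\neq0$, $B\neq0$ genuinely enter, and your separate treatment of the degenerate cases closes that hole. The one thing to note is that the ``standard fact'' you invoke in your first paragraph is false without the nonnullity caveat (if $Q=0$ the product is invariant while $\Ra(P^*)\subseteq\Ra(M^*)$ may fail), but since you prove both directions yourself and handle $A=0$ or $B=0$ explicitly, your write-up does not actually rely on the unqualified statement.
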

\begin{proposition}\label{righ parallel summable} Let $A, B\in \Cm$ be two matrices written as in \eqref{A and B right}. Then $A$ is parallel summable to $B$ if and only if $\Ra(B_2^*)\subseteq \Ra(B_4^*)$.
\end{proposition}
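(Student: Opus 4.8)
The plan is to apply the range criterion for parallel summability from Lemma~\ref{P2}: $A$ and $B$ are p.s. if and only if $\Ra(A)\subseteq\Ra(A+B)$ and $\Ra(A^*)\subseteq\Ra(A^*+B^*)$. From the block forms \eqref{A and B right} I would first record the sums
\[
A+B=U\begin{bmatrix}\Sigma & B_2\\ 0 & B_4\end{bmatrix}V^*,\qquad
A^*+B^*=V\begin{bmatrix}\Sigma^* & 0\\ B_2^* & B_4^*\end{bmatrix}U^*,
\]
and note that, since $U$ and $V$ are unitary (hence invertible), each range inclusion can be tested on the central block matrices alone, using $\Ra(UMV^*)=U\,\Ra(M)$.

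The column-space condition is for free. Because $\Sigma$ is nonsingular, the first $r$ columns of the block form of $A+B$ already span $\C^r\times\{0\}$, which is precisely the block range of $A$; hence $\Ra(A)\subseteq\Ra(A+B)$ holds unconditionally and imposes no constraint. The substance therefore lies in the row-space inclusion $\Ra(A^*)\subseteq\Ra(A^*+B^*)$.

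Here, using nonsingularity of $\Sigma^*$, the range $\Ra(A^*)$ corresponds to the block subspace $\C^r\times\{0\}$, so I must decide when every vector $\begin{bmatrix}x\\0\end{bmatrix}$ with $x\in\C^r$ lies in the column space of $\begin{bmatrix}\Sigma^* & 0\\ B_2^* & B_4^*\end{bmatrix}$. Writing a general element of that column space as $\begin{bmatrix}\Sigma^* y\\ B_2^* y+B_4^* z\end{bmatrix}$, the top block forces $y=(\Sigma^*)^{-1}x$, while the bottom block requires the equation $B_4^*z=-B_2^*(\Sigma^*)^{-1}x$ to be solvable in $z$. Quantifying over all $x\in\C^r$, this is exactly the inclusion $\Ra\big(B_2^*(\Sigma^*)^{-1}\big)\subseteq\Ra(B_4^*)$.

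The final and decisive step is a simplification: since $(\Sigma^*)^{-1}$ is nonsingular, right multiplication by it preserves column spaces, so $\Ra\big(B_2^*(\Sigma^*)^{-1}\big)=\Ra(B_2^*)$, and the criterion collapses to $\Ra(B_2^*)\subseteq\Ra(B_4^*)$, as claimed. I expect the only point needing care to be keeping precise track of where the nonsingularity of $\Sigma$ is invoked, so as to confirm that the first inclusion is genuinely vacuous while the second carries the entire constraint; once the block system is set up, the remainder is routine block linear algebra.
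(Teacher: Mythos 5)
Your proof is correct, and it takes a genuinely lighter route than the paper's. Both arguments start from the same place, Lemma~\ref{P2}, and both observe that the column-space inclusion $\Ra(A)\subseteq\Ra(A+B)$ is automatic because $\Sigma$ is nonsingular. The difference is in how the second inclusion $\Ra(A^*)\subseteq\Ra(A^*+B^*)$ is handled: you test it directly by solving the block system $\Sigma^*y=x$, $B_2^*y+B_4^*z=0$, which collapses immediately (via invariance of column spaces under right multiplication by the nonsingular $(\Sigma^*)^{-1}$) to $\Ra(B_2^*)\subseteq\Ra(B_4^*)$. The paper instead rewrites the two inclusions as the projector identities $P_{A+B}A=A$ and $AQ_{A+B}=A$, computes $(A+B)^\dag$ explicitly from Lemma~\ref{MP triangular}, and grinds the identity $AQ_{A+B}=A$ down to $\Omega_{A+B}=B_2\overline{Q}_{B_4}=0$, which is then translated back into the stated range inclusion. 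Your argument is more elementary and self-contained: it needs no Moore--Penrose formulas, no projectors, and no Lemma~\ref{MP triangular}. What the paper's heavier computation buys is reusable byproducts: the explicit formula \eqref{MP of A+B} for $(A+B)^\dag$ and the intermediate identities $\Omega_{A+B}=0$, $\Delta_{A+B}=(\Sigma^*)^{-1}\Sigma^{-1}$ are invoked again in the proof of Theorem~\ref{corollary right 1} (to show $A:B=0$) and in Lemma~\ref{lemma additivity right}, so the paper amortizes that work across several later results, whereas your proof would leave those computations still to be done when they are needed.
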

\begin{proof}
Let us consider  decompositions of $A$ and $B$ given in \eqref{A and B right}, that is,
\begin{equation*}\label{A+B}
A+B =  U\begin{bmatrix}
 \Sigma & B_2\\
  0 & B_4
 \end{bmatrix}V^*.
\end{equation*}
From Lemma \ref{MP triangular} it follows
\begin{equation} \label{MP of A+B}
(A+B)^\dag = V\left[\begin{array}{cc}
\Sigma^*\Delta_{A+B} & -\Sigma^*\Delta_{A+B} B_2 B_4^\dagger\\
\Omega_{A+B}^*\Delta_{A+B} & B_4^\dagger-\Omega_{A+B}^*\Delta_{A+B} B_2 B_4^\dagger
\end{array}\right]U^*,
\end{equation}
where $\Delta_{A+B}=(\Sigma \Sigma^*+ \Omega_{A+B} \Omega_{A+B}^*)^{-1}$ and $\Omega_{A+B}=B_2\overline{Q}_{B_4}$. \\
Now,  \eqref{MP of A+B} and \eqref{PA and QA}  imply
 \begin{eqnarray}
 P_{A+B}&=& V\begin{bmatrix}
 I_r & 0 \\
  0 & P_{B_4}
 \end{bmatrix}V^*, \label{P_A+B} \\
Q_{A+B} &=& U\left[\begin{array}{cc}
\Sigma^* \Delta_{A+B} \Sigma & \Sigma^* \Delta_{A+B} \Omega_{A+B}\\
\Omega_{A+B}^*\Delta_{A+B}  \Sigma & Q_{B_4}+ \Omega_{A+B}^* \Delta_{A+B} \Omega_{A+B}
\end{array}\right]U^*. \label{Q_A+B}
\end{eqnarray}
By Lemma \ref{P2},  $A$ is parallel summable to $B$ if and only if $\Ra(A)\subseteq \Ra(A+B)$ and $\Ra(A^*)\subseteq \Ra(A^*+B^*)$, which in turn is equivalent to 
\begin{equation} \label{equivalence parallel sum}
P_{A+B}A=A \quad \text{and} \quad AQ_{A+B}=A.
\end{equation}
From \eqref{P_A+B}, it is easy to see that the first equation in \eqref{equivalence parallel sum} always is true. So, $A$ is parallel summable to $B$ if and only if $AQ_{A+B}=A$. In consequence,   from \eqref{Q_A+B},  direct calculations show that  
\begin{equation}\label{eq2}
A ~~\text{is p.s.} ~~B ~~ \Leftrightarrow~~\left[\begin{array}{cc}
\Sigma \Sigma^* \Delta_{A+B} \Sigma & \Sigma \Sigma^* \Delta_{A+B} \Omega_{A+B}\\
0 & 0
\end{array}\right]
=\begin{bmatrix}
 \Sigma & 0 \\
  0 & 0
 \end{bmatrix}.
\end{equation}
Note that \eqref{eq2} holds if and only if  $\Sigma \Sigma^* \Delta_{A+B} \Sigma =\Sigma$ and
$\Sigma \Sigma^* \Delta_{A+B} \Omega_{A+B}=0$. By using the nonsingularity of $\Sigma$ and $\Delta_{A+B}$,  it is clear that the two last equations are equivalent to   $\Omega_{A+B}=0$. Therefore, $B_2 \overline{Q}_{B_4}=0$  or equivalently  $\Ra(I_{n-r}-Q_{B_4})\subseteq \Nu(B_2)$ which in turn is equivalent to $\Nu(Q_{B_4})\subseteq \Nu(B_2)$. So, as $\Nu(Q_{B_4})=\Nu(B_4)=\Ra(B_4^*)^{\perp}$ and $\Nu(B_2)=\Ra(B_2^*)^{\perp}$ we have that $A$ is parallel summable to $B$ if and only if  $\Ra(B_2^*)\subseteq \Ra(B_4^*)$.
\end{proof}
Next, we stablish  the main result of this section. 

\begin{theorem}\label{corollary right 1} Let $A, B\in \Cm$ be such that $A\perp_{*,r} B$. Then $A$ is parallel summable to $B$ if and only if $\Ra(\overline{Q}_AB^*A)\subseteq \Ra(\overline{Q}_A B^*\overline{P}_A)$. In this case, $A:B=0$.
\end{theorem}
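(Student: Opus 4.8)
The plan is to reduce everything to the canonical blocks supplied by Theorem \ref{right canonical form}. Since $A\perp_{*,r} B$, I can write $A$ and $B$ as in \eqref{A and B right}, with $\Sigma$ nonsingular, $B_2\in\C^{r\times(n-r)}$ and $B_4\in\C^{(m-r)\times(n-r)}$. In these SVD coordinates the four projectors read $P_A=U\begin{bmatrix}I_r&0\\0&0\end{bmatrix}U^*$, $\overline{P}_A=U\begin{bmatrix}0&0\\0&I_{m-r}\end{bmatrix}U^*$, $Q_A=V\begin{bmatrix}I_r&0\\0&0\end{bmatrix}V^*$ and $\overline{Q}_A=V\begin{bmatrix}0&0\\0&I_{n-r}\end{bmatrix}V^*$. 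First I would substitute these, together with $B^*=V\begin{bmatrix}0&0\\B_2^*&B_4^*\end{bmatrix}U^*$, into the two matrices appearing in the statement and obtain $\overline{Q}_AB^*A=V\begin{bmatrix}0&0\\B_2^*\Sigma&0\end{bmatrix}V^*$ and $\overline{Q}_AB^*\overline{P}_A=V\begin{bmatrix}0&0\\0&B_4^*\end{bmatrix}U^*$ by plain block arithmetic.

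Second, I would convert the range inclusion into a block condition. Because $V$ is unitary (hence a bijection of $\C^n$) and right multiplication by an invertible matrix ($V^*$ or $U^*$) leaves a column space unchanged, the inclusion $\Ra(\overline{Q}_AB^*A)\subseteq\Ra(\overline{Q}_AB^*\overline{P}_A)$ is equivalent to $\Ra\begin{bmatrix}0\\B_2^*\Sigma\end{bmatrix}\subseteq\Ra\begin{bmatrix}0\\B_4^*\end{bmatrix}$. The nonsingularity of $\Sigma$ gives $\Ra(B_2^*\Sigma)=\Ra(B_2^*)$, so this inclusion is exactly $\Ra(B_2^*)\subseteq\Ra(B_4^*)$. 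At that point Proposition \ref{righ parallel summable} identifies $\Ra(B_2^*)\subseteq\Ra(B_4^*)$ with the parallel summability of $A$ and $B$, which closes the equivalence in both directions simultaneously.

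Third, for the value I would use Remark \ref{remark PS1} and compute $A:B=A(A+B)^\dag B$ directly from \eqref{MP of A+B}. Multiplying $A(A+B)^\dag$ collapses (the bottom block row of $A$ vanishes) to $U\begin{bmatrix}\Sigma\Sigma^*\Delta_{A+B}&-\Sigma\Sigma^*\Delta_{A+B}B_2B_4^\dag\\0&0\end{bmatrix}U^*$, and a further multiplication by $B$ leaves only the $(1,2)$ block $\Sigma\Sigma^*\Delta_{A+B}B_2(I_{n-r}-B_4^\dag B_4)=\Sigma\Sigma^*\Delta_{A+B}\Omega_{A+B}$. Since parallel summability forces $\Omega_{A+B}=0$, exactly as derived in the proof of Proposition \ref{righ parallel summable}, this block vanishes and therefore $A:B=0$.

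I expect the only delicate point to be the bookkeeping in the second step: keeping straight which one-sided multiplications preserve a column space (right multiplication by an invertible matrix) and which merely transport it (left multiplication by the unitary $V$), and then using the invertibility of $\Sigma$ to strip it off. Everything else is routine block multiplication once the projectors are written in the SVD coordinates.
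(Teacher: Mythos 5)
Your proposal is correct and follows essentially the same route as the paper: the canonical form of Theorem \ref{right canonical form}, reduction of the range inclusion to $\Ra(B_2^*)\subseteq\Ra(B_4^*)$ (the paper merely replaces $A$ by $P_A$ first, while you strip off the invertible $\Sigma$ afterwards), an appeal to Proposition \ref{righ parallel summable}, and the same block computation of $A(A+B)^\dag B$ via \eqref{MP of A+B} to conclude $A:B=0$. All block manipulations and range arguments check out.
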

\begin{proof}
Since $A\perp_{*,r} B$, Theorem \ref{right canonical form} implies  that $A$ and $B$ can be written as in
\eqref{A and B right}. Therefore, by Proposition \ref{righ parallel summable}  it is sufficient to prove that $\Ra(\overline{Q}_AB^*A)=\Ra(\overline{Q}_AB^*P_A)\subseteq \Ra(\overline{Q}_A B^*\overline{P}_A)$ is equivalent to $\Ra(B_2^*)\subseteq \Ra(B_4^*)$. In fact,
\begin{eqnarray}
\overline{Q}_AB^*P_A &=& V\begin{bmatrix}
 0 & 0 \\
  0 & I_{n-r}
 \end{bmatrix}\begin{bmatrix}
 0 & 0 \\
  B_2^* & B_4^*
 \end{bmatrix}\begin{bmatrix}
 I_r & 0 \\
  0 & 0
 \end{bmatrix}U^*=V\begin{bmatrix}
 0 & 0 \\
  B_2^* & 0
 \end{bmatrix}U^*, \label{corollary eq1} \\
 \overline{Q}_AB^*\overline{P}_A &=& V\begin{bmatrix}
 0 & 0 \\
  0 & I_{n-r}
 \end{bmatrix}\begin{bmatrix}
 0 & 0 \\
  B_2^* & B_4^*
 \end{bmatrix}\begin{bmatrix}
 0 & 0 \\
  0 & I_{n-r}
 \end{bmatrix}U^*=V\begin{bmatrix}
 0 & 0 \\
 0 &  B_4^*
 \end{bmatrix}U^*. \label{corollary eq2}
\end{eqnarray}
From \eqref{corollary eq1} and \eqref{corollary eq2} the affirmation is clear. \\
In order to prove $A:B=0$, we assume that $A$ is parallel summable to $B$. Therefore $A:B=A (A+B)^\dag B$. Moreover, as in the proof of Proposition \ref{righ parallel summable} we have that $\Ra(B_2^*)\subseteq \Ra(B_4^*)$ is equivalent to $B_2(I_{n-r}-Q_{B_4})=0$. So, $(I_{n-r}-Q_{B_4})B_2^*=0$ and
$\Delta_{A+B}= (\Sigma^*)^{-1} \Sigma^{-1}$. Now, from \eqref{A and B right} and \eqref{MP of A+B} we obtain
\begin{equation*}
A(A+B)^\dag B = U\begin{bmatrix}
 \Sigma & 0 \\
  0 & 0
 \end{bmatrix}\left[\begin{array}{cc}
\Sigma^{-1} & -\Sigma^{-1} B_2 B_4^\dag\\
0 & B_4^\dag
\end{array}\right]\begin{bmatrix}
 0 & B_2 \\
  0 & B_4
 \end{bmatrix}V^*=U\begin{bmatrix}
 0 & B_2(I_{n-r}-Q_{B_4}) \\
  0 & 0
 \end{bmatrix}V^*=0,
\end{equation*}
that is, $A:B=0$. This, completes the proof.
\end{proof}

\begin{proposition} \label{left parallel summable} Let $A, B\in \Cm$ be two matrices of the forms \eqref{B respect to A.}. Then $A$ is parallel summable to $B$ if and only if $\Ra(B_3)\subseteq \Ra(B_4)$.
\end{proposition}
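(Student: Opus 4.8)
The plan is to deduce the statement from its right-sided analogue, Proposition~\ref{righ parallel summable}, by passing to conjugate transposes. The key observation is that parallel summability is self-dual under conjugate transposition. Indeed, by Lemma~\ref{P2}, $A$ is parallel summable to $B$ if and only if $\Ra(A)\subseteq \Ra(A+B)$ and $\Ra(A^*)\subseteq \Ra(A^*+B^*)$; applying the very same characterization to the pair $A^*,B^*$ merely interchanges these two inclusions, so $A$ is parallel summable to $B$ if and only if $A^*$ is parallel summable to $B^*$. This reduces the left-sided situation to a right-sided one.

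Next I would take conjugate transposes of the matrices in \eqref{B respect to A.}, obtaining
$$A^*=V\begin{bmatrix} \Sigma^* & 0 \\ 0 & 0 \end{bmatrix}U^*,\qquad B^*=V\begin{bmatrix} 0 & B_3^* \\ 0 & B_4^* \end{bmatrix}U^*,$$
where $V\in\Cnn$ and $U\in\Cmm$ are unitary and $\Sigma^*$ is nonsingular. A direct comparison of the block sizes shows that this is precisely the right $*$-orthogonal canonical form \eqref{A and B right} for the pair $A^*,B^*$ (now regarded as matrices in $\Cn$), with the roles of $U$ and $V$ exchanged and with $\Sigma^*$, $B_3^*$, $B_4^*$ playing the roles of $\Sigma$, $B_2$, $B_4$, respectively.

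Finally I would invoke Proposition~\ref{righ parallel summable} applied to $A^*$ and $B^*$: it yields that $A^*$ is parallel summable to $B^*$ if and only if $\Ra\big((B_3^*)^*\big)\subseteq \Ra\big((B_4^*)^*\big)$, that is, $\Ra(B_3)\subseteq \Ra(B_4)$. Combining this with the self-duality established in the first step gives that $A$ is parallel summable to $B$ if and only if $\Ra(B_3)\subseteq \Ra(B_4)$, as claimed.

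I expect the only delicate point to be the bookkeeping in the second step: one must verify that after transposition the block dimensions of $A^*$ and $B^*$ match the template \eqref{A and B right} exactly (here $A^*,B^*\in\Cn$, so that $B_3^*$ is $r\times(m-r)$ and $B_4^*$ is $(n-r)\times(m-r)$), so that Proposition~\ref{righ parallel summable} genuinely applies. Should a self-contained argument be preferred, the entire proof of Proposition~\ref{righ parallel summable} can instead be replayed for the lower block-triangular matrix $A+B=U\begin{bmatrix}\Sigma & 0\\ B_3 & B_4\end{bmatrix}V^*$, computing $(A+B)^\dag$, $P_{A+B}$ and $Q_{A+B}$; in that route the condition $AQ_{A+B}=A$ holds automatically, while $P_{A+B}A=A$ reduces to $\Ra(B_3)\subseteq \Ra(B_4)$. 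This alternative, however, duplicates computations already carried out and is the main source of potential error, which is why the duality argument is preferable.
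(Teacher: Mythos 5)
Your proof is correct. The paper itself gives no proof of Proposition~\ref{left parallel summable}: it is stated right after the right-handed case, with the implicit understanding that one repeats the computation of Proposition~\ref{righ parallel summable} for the lower block triangular matrix $A+B=U\begin{bmatrix}\Sigma & 0\\ B_3 & B_4\end{bmatrix}V^*$ --- which is exactly the ``self-contained'' alternative you sketch at the end. Your duality argument is a genuinely different and cleaner route: the observation that the characterization in Lemma~\ref{P2} is symmetric under conjugate transposition (so $A$ is parallel summable to $B$ if and only if $A^*$ is parallel summable to $B^*$), combined with the fact that transposing \eqref{B respect to A.} produces exactly the template \eqref{A and B right} with $m$ and $n$ interchanged and with $\Sigma^*$, $B_3^*$, $B_4^*$ in place of $\Sigma$, $B_2$, $B_4$, reduces the left case to the already proven right case with no new computation. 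The bookkeeping you flag is indeed the only delicate point, and your check is right: $B_3^*\in\C^{r\times(m-r)}$ and $B_4^*\in\C^{(n-r)\times(m-r)}$ are precisely the shapes \eqref{A and B right} requires of a pair in $\C^{n\times m}$, and Proposition~\ref{righ parallel summable} applies verbatim with the roles of $m$ and $n$ exchanged, since nothing in its proof uses $m$ and $n$ beyond labels. What the duality buys is economy and immunity to computational slips (no need to recompute $(A+B)^\dag$, $P_{A+B}$, $Q_{A+B}$ for the lower triangular form); what the direct computation would buy is explicit formulas for the Moore--Penrose inverse and the projectors in the left-handed setting, which one would want anyway to prove the paper's other unproved left-handed statements, such as Lemma~\ref{lemma additivity left} and Theorem~\ref{theorem additivity left} --- though those, too, can be obtained by the same transposition trick.
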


\begin{theorem} Let $A, B\in \Cm$ be such that $A\perp_{*,l} B$. Then $A$ is parallel summable to $B$ if and only if 
$\Ra(\overline{P}_ABA^*)\subseteq \Ra(\overline{P}_AB\overline{Q}_A)$. In this case, $A:B=0$.
\end{theorem}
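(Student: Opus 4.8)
The plan is to mirror the proof of Theorem \ref{corollary right 1}, exploiting the left canonical form together with the characterization of parallel summability already obtained for left $*$-orthogonal matrices. First, since $A\perp_{*,l}B$, Theorem \ref{left canonical form} lets me write $A$ and $B$ simultaneously as in \eqref{B respect to A.}, namely $A=U\begin{bmatrix}\Sigma & 0\\0 & 0\end{bmatrix}V^*$ and $B=U\begin{bmatrix}0 & 0\\B_3 & B_4\end{bmatrix}V^*$ with $U,V$ unitary and $\Sigma$ nonsingular. By Proposition \ref{left parallel summable}, $A$ is parallel summable to $B$ if and only if $\Ra(B_3)\subseteq\Ra(B_4)$, so it suffices to prove that the inclusion $\Ra(\overline{P}_ABA^*)\subseteq\Ra(\overline{P}_AB\overline{Q}_A)$ is equivalent to $\Ra(B_3)\subseteq\Ra(B_4)$.

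For this I would compute the two relevant matrices in block form. Using $\overline{P}_A=U\begin{bmatrix}0&0\\0&I_{m-r}\end{bmatrix}U^*$, $\overline{Q}_A=V\begin{bmatrix}0&0\\0&I_{n-r}\end{bmatrix}V^*$ and $A^*=V\begin{bmatrix}\Sigma^*&0\\0&0\end{bmatrix}U^*$, a direct multiplication gives $\overline{P}_ABA^*=U\begin{bmatrix}0&0\\B_3\Sigma^*&0\end{bmatrix}U^*$ and $\overline{P}_AB\overline{Q}_A=U\begin{bmatrix}0&0\\0&B_4\end{bmatrix}V^*$. Since $\Sigma^*$ is nonsingular and $U,V$ are unitary, the column spaces of these two matrices are the images under $U$ of the subspaces $\{0\}\oplus\Ra(B_3)$ and $\{0\}\oplus\Ra(B_4)$ of $\C^m$; hence the stated inclusion holds precisely when $\Ra(B_3)\subseteq\Ra(B_4)$, which closes the equivalence.

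It remains to evaluate $A:B$ assuming parallel summability, where $A:B=A(A+B)^\dag B$ by Remark \ref{remark PS1}. Here $A+B=U\begin{bmatrix}\Sigma & 0\\B_3 & B_4\end{bmatrix}V^*$ is block \emph{lower} triangular, so Lemma \ref{MP triangular} does not apply directly; I expect this to be the main obstacle. I would circumvent it by applying Lemma \ref{MP triangular} to the block upper triangular matrix $(A+B)^*=V\begin{bmatrix}\Sigma^* & B_3^*\\0 & B_4^*\end{bmatrix}U^*$ and then invoking $(A+B)^\dag=\big(((A+B)^*)^\dag\big)^*$. The parallel summability condition $\Ra(B_3)\subseteq\Ra(B_4)$ is equivalent to $(I_{m-r}-P_{B_4})B_3=0$, which forces the auxiliary term $\Omega=B_3^*\overline{Q}_{B_4^*}=B_3^*\overline{P}_{B_4}$ appearing in the lemma to vanish; this collapses the formula to $(A+B)^\dag=V\begin{bmatrix}\Sigma^{-1} & 0\\ -B_4^\dag B_3\Sigma^{-1} & B_4^\dag\end{bmatrix}U^*$.

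Finally, multiplying out with these block forms, the factor $A(A+B)^\dag$ reduces to $P_A=U\begin{bmatrix}I_r & 0\\0 & 0\end{bmatrix}U^*$, and since $B$ has vanishing top block-rows in the canonical form (equivalently $P_AB=0$ directly from $A^*B=0$), the product $A(A+B)^\dag B$ is zero, giving $A:B=0$ and completing the proof. The only genuinely delicate point is the passage through the conjugate transpose to obtain $(A+B)^\dag$; every other step is a routine block computation paralleling Theorem \ref{corollary right 1}.
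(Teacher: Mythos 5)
Your proof is correct and follows exactly the route the paper intends: this left-handed theorem is stated in the paper without proof, as the mirror image of Theorem \ref{corollary right 1}, and your argument reproduces that proof in the left canonical form \eqref{B respect to A.}, invoking Proposition \ref{left parallel summable} precisely as the paper's proof of the right version invokes Proposition \ref{righ parallel summable}. The one point where the mirroring is not purely mechanical — $A+B$ is now block \emph{lower} triangular, so Lemma \ref{MP triangular} does not apply directly — you handle correctly via $(A+B)^\dag=\bigl(((A+B)^*)^\dag\bigr)^*$, and the ensuing steps (the identification $\Omega=B_3^*\overline{Q}_{B_4^*}=B_3^*\overline{P}_{B_4}$, its vanishing under parallel summability, and $A(A+B)^\dag B=P_AB=0$) are all right. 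As a minor remark, an even shorter derivation is possible by pure duality: apply Theorem \ref{corollary right 1} to the pair $A^*,B^*$, noting $A\perp_{*,l}B\Leftrightarrow A^*\perp_{*,r}B^*$ (Proposition \ref{trivial 1}), $\overline{Q}_{A^*}=\overline{P}_A$, $\overline{P}_{A^*}=\overline{Q}_A$, and that parallel summability and the vanishing of the parallel sum are preserved under conjugate transposition by Lemma \ref{P2}; your direct computation, however, has the advantage of producing the explicit block formula for $(A+B)^\dag$ in the left case.
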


\subsection{Additivity of the Moore-Penrose inverse}

This  section deals with  the additivity property of the Moore-Penrose inverse under conditions of right (resp., left) $*$-orthogonality.

Recall that a reflexive and transitive binary relation on a non-empty set is called a pre-order. A pre-order is a partial order if it is antisymmetric. We further note that the generalized inverses play an important role in the study of matrix partial orders \cite{MiBhMa}.
The star  order was defined by Drazin (1978)   as follows:
\begin{equation*}\label{st}
\text{For}~ A,B\in \Cm:~ A \st B ~\Leftrightarrow~  A^*A= A^*B ~\text{and}~ AA^*=BA^*.
\end{equation*}
Hartwig and Styan  \cite{HaSt} gave the following characterization of the star partial order:

  \begin{equation}\label{star characterization}
  A\st B ~\Leftrightarrow~\rk(B-A)=\rk(B)-\rk(A)~\text{and}~ (B-A)^\dag=B^\dag-A^\dag.
  \end{equation}
  The condition $\rk(B-A)=\rk(B)-\rk(A)$ is known as the \emph{rank subtractivity} and the condition $(B-A)^\dag=B^\dag-A^\dag$ is called the
  \emph{dagger-subtractivity}. They also noted that if $A,B$ are  $*$-orthogonal, then \eqref{star characterization} can be rephrased as
  \begin{equation}\label{star characterization 1}
A\perp_* B ~~\Leftrightarrow~~ (A+B)^\dag=A^\dag+B^\dag \quad\text{and}\quad \rank(A+B)=\rank(A)+\rank(B),
\end{equation}
which is a useful characterization of $*$-orthogonality.

The first property given in \eqref{star characterization 1} is known as {\it dagger-additivity} whereas
the second property is called {\it rank-additivity}.

In view of \eqref{star characterization 1} it is clear that $A\perp_{*} B$ implies  dagger-additivity, that is,
\begin{equation*}\label{implies 1}
A^*B=0 ~ \text{and} ~ BA^*=0  ~\Rightarrow~ (A+B)^\dag=A^\dag+B^\dag.
\end{equation*}
Recently, Baksalary et al.  \cite{BaSiTr} considered the following alternative conditions for the Moore-Penrose inverse of the sum of $A+B$ to be additive:
\begin{equation}\label{implies 2}
AB^*+BB^*=0 \quad \text{and} \quad B^*A+B^*B=0 .
\end{equation}
However, the authors proved that the pairs of identities given in \eqref{implies 2} and the $*$-orthogonality are independent.
The authors also proved that the identities in \eqref{implies 2} are satisfied if and only if $B\st -A$ (or equivalently $-B \st A$).

Note that \eqref{implies 2} can be rewritten in term of one-sided $*$-orthogonality as follows:
\begin{equation*}\label{implies 3}
B\perp_{*,r} A+B  \quad \text{and} \quad B\perp_{*,l} A+B.
\end{equation*}
\begin{remark}
If $A^*B=0$  or $BA^*=0$ in \eqref{implies 2} then $B=0$.
\end{remark}

Therefore, a natural question that arises is under what weaker conditions the additivity of the Moore-Penrose inverse can also be true?

Our next results show weaker conditions than $*$-orthogonality to ensure the additivity property of the Moore-Penrose inverse. Before, we need an auxiliary lemma. 

\begin{lemma}\label{lemma additivity right} Let $A, B\in \Cm$ be two matrices written as in \eqref{A and B right}. Then $(A+B)^\dag=A^\dag + B^\dag$ if and only if $B_2=0$.
\end{lemma}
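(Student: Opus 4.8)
The plan is to exploit the canonical forms already in hand and reduce the Moore–Penrose inverse of the sum to an explicit block computation. Since $A\perp_{*,r} B$, Theorem~\ref{right canonical form} lets me write $A$ and $B$ simultaneously as in \eqref{A and B right}, with $A^\dag$ given by \eqref{MP respect SVD} and $B^\dag$ given by \eqref{right B+}. The sum $A+B$ is then the block upper triangular matrix $U\left[\begin{smallmatrix} \Sigma & B_2 \\ 0 & B_4 \end{smallmatrix}\right]V^*$ whose leading block $\Sigma$ is nonsingular, so Lemma~\ref{MP triangular} applies and gives the explicit formula \eqref{MP of A+B} for $(A+B)^\dag$, with $\Delta_{A+B}=(\Sigma\Sigma^*+\Omega_{A+B}\Omega_{A+B}^*)^{-1}$ and $\Omega_{A+B}=B_2\overline{Q}_{B_4}$.

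First I would dispose of the easy implication. Assuming $B_2=0$, the matrix $\Omega_{A+B}$ vanishes, so $\Delta_{A+B}=(\Sigma\Sigma^*)^{-1}=(\Sigma^*)^{-1}\Sigma^{-1}$, and \eqref{MP of A+B} collapses to the block diagonal form $V\left[\begin{smallmatrix} \Sigma^{-1} & 0 \\ 0 & B_4^\dag \end{smallmatrix}\right]U^*$. On the other hand $A^\dag+B^\dag$ is obtained by adding \eqref{MP respect SVD} to \eqref{right B+} with $B_2=0$: the $(1,1)$ block is $\Sigma^{-1}$, the $(1,2)$ and $(2,1)$ blocks vanish, and the $(2,2)$ block is $(B_4^*B_4)^\dag B_4^*=B_4^\dag$. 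Hence the two expressions coincide and dagger-additivity holds. This direction is routine substitution.

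The converse is where the real work lies, and it is the step I expect to be the main obstacle. Assuming $(A+B)^\dag=A^\dag+B^\dag$, I would compare the two block expressions entry by entry in the $U,V$ coordinates. The cleanest route is probably to multiply the identity on the appropriate sides by $\Sigma$ and by the projectors $P_{A}$, $\overline{P}_A$, so as to isolate the $(1,2)$ block, where $B_2$ lives; from \eqref{MP of A+B} this block is $-\Sigma^*\Delta_{A+B}B_2B_4^\dag$, whereas in $A^\dag+B^\dag$ the corresponding contribution comes only from \eqref{right B+}. The delicate point is that $B_2$ enters $\Delta_{A+B}$ nonlinearly through $\Omega_{A+B}\Omega_{A+B}^*$, so one cannot simply read off $B_2=0$; I would need to combine the $(1,2)$-block equation with the $(1,1)$-block equation to decouple $B_2$ from $\Delta_{A+B}$. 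Alternatively, rather than forcing the triangular formula, it may be cleaner to use the defining Moore–Penrose equations: dagger-additivity together with the structure of $A$ and $B$ forces the range and null space conditions $\Ra(A)\perp\Ra(B)$ to be upgraded, and I would try to show that the Hermitian-projector axioms $(A+B)(A^\dag+B^\dag)$ and $(A^\dag+B^\dag)(A+B)$ being self-adjoint already pin down the cross term $AB^\dag+BA^\dag$ in a way that is incompatible with $B_2\neq 0$. Either way, the crux is extracting the linear condition $B_2=0$ from an identity in which $B_2$ appears quadratically, and I would handle this by a careful rank or trace argument (e.g. examining $\Ra(B_2)$ against $\Ra(B_4)$ through \eqref{P_A+B} and \eqref{Q_A+B}) rather than brute-force block algebra.
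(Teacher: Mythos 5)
Your setup and the forward implication are correct and coincide with the paper's own argument: with $A$ and $B$ as in \eqref{A and B right}, the sum is block upper triangular with nonsingular leading block $\Sigma$, Lemma~\ref{MP triangular} gives \eqref{MP of A+B}, and when $B_2=0$ this collapses to $A^\dag+B^\dag$ by direct substitution into \eqref{MP respect SVD} and \eqref{right B+}.

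The gap is in the converse, which you explicitly defer rather than prove: you list three candidate strategies (combining the $(1,1)$ and $(1,2)$ block equations; invoking the Moore--Penrose axioms on $(A+B)(A^\dag+B^\dag)$; a ``rank or trace argument'') but carry out none of them, so as written there is no proof of the implication $(A+B)^\dag=A^\dag+B^\dag \Rightarrow B_2=0$. Moreover, the difficulty you single out as the crux --- that $B_2$ enters $\Delta_{A+B}$ quadratically, so one ``cannot simply read off $B_2=0$'' --- is not actually an obstacle, and the first of your strategies, which is the paper's, works in two lines. Equating the $(1,1)$ blocks of \eqref{MP of A+B} and $A^\dag+B^\dag$ gives $\Sigma^*\Delta_{A+B}=\Sigma^{-1}$, i.e.\ $\Delta_{A+B}=(\Sigma\Sigma^*)^{-1}$; since by definition $\Delta_{A+B}^{-1}=\Sigma\Sigma^*+\Omega_{A+B}\Omega_{A+B}^*$, this forces $\Omega_{A+B}\Omega_{A+B}^*=0$ and hence $\Omega_{A+B}=0$, because $XX^*=0$ implies $X=0$ --- the quadratic term is positive semidefinite, which is exactly what makes it harmless, and no rank or trace bookkeeping is needed. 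Thus $B_2\overline{Q}_{B_4}=0$, i.e.\ $B_2=B_2B_4^\dag B_4$. Equating the $(1,2)$ blocks gives $\Sigma^*\Delta_{A+B}B_2B_4^\dag=0$, and nonsingularity of $\Sigma^*\Delta_{A+B}$ yields $B_2B_4^\dag=0$; therefore $B_2=\bigl(B_2B_4^\dag\bigr)B_4=0$. This is precisely how the paper finishes (its equations \eqref{add 1} and \eqref{add 2}), so your plan pointed at the right pair of blocks; you needed to execute that combination instead of retreating to the vaguer alternatives you proposed.
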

\begin{proof}
We consider $A$ and $B$ of the form \eqref{A and B right}. From, \eqref{MP respect SVD}, \eqref{right B+}, and \eqref{MP of A+B} we have that $(A+B)^\dag=A^\dag + B^\dag$ if and only if
\begin{equation}\label{addtivity sum eq1}
\left[\begin{array}{cc}
\Sigma^*\Delta_{A+B} & -\Sigma^*\Delta_{A+B} B_2 B_4^\dagger\\
\Omega_{A+B}^*\Delta_{A+B} & B_4^\dagger-\Omega_{A+B}^*\Delta_{A+B} B_2 B_4^\dagger
\end{array}\right]=\begin{bmatrix}
       \Sigma^{-1} & 0 \\
(B_2^*B_2+B_4^*B_4)^\dag B_2^* & (B_2^*B_2+B_4^*B_4)^\dag B_4^*
\end{bmatrix},
\end{equation}
where $\Delta_{A+B}=(\Sigma \Sigma^*+ \Omega_{A+B} \Omega_{A+B}^*)^{-1}$ and $\Omega_{A+B}=B_2\overline{Q}_{B_4}$. \\
We will prove that \eqref{addtivity sum eq1} holds if and only if $B_2=0$. In fact, as $\Sigma$ and $\Delta_{A+B}$ are nonsingular, clearly \eqref{addtivity sum eq1} is equivalent to
\begin{eqnarray}
\Sigma\Sigma^*\Delta_{A+B} &=& I_r, \label{add 1}\\
B_2B_4^\dag &=& 0, \label{add 2}\\
\Omega_{A+B}^*\Delta_{A+B}  &=& (B_2^*B_2+B_4^*B_4)^\dag B_2^*, \label{add 3} \\
B_4^\dagger-\Omega_{A+B}^*\Delta_{A+B} B_2 B_4^\dagger &=& (B_2^*B_2+B_4^*B_4)^\dag B_4^*. \label{add 4}
\end{eqnarray}
By using the fact that $\Delta_{A+B}=(\Sigma \Sigma^*+ \Omega_{A+B} \Omega_{A+B}^*)^{-1}$ , from \eqref{add 1} it follows that $\Omega_{A+B} \Omega_{A+B}^*=0$, whence $\Omega_{A+B}=0$.  In consequence, as $\Omega_{A+B}=B_2\overline{Q}_{B_4}$, from \eqref{add 2} we have  $B_2=0$. Conversely, if $B_2=0$, it easy to see that conditions \eqref{add 1}-\eqref{add 4} are true.
\end{proof}

\begin{theorem}\label{theorem additivity right} Let $A, B\in \Cm$ be such that $A\perp_{*,r} B$. Then $(A+B)^\dag=A^\dag + B^\dag$ if and only if $A^*B(I_n-Q_A)=0$.
\end{theorem}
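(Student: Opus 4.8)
The plan is to turn the abstract identity into the single block condition $B_2=0$ that Lemma~\ref{lemma additivity right} already controls. Since $A\perp_{*,r}B$, Theorem~\ref{right canonical form} lets me write $A$ and $B$ simultaneously in the canonical form \eqref{A and B right}, with $\Sigma$ nonsingular and blocks $B_2\in\C^{r\times(n-r)}$, $B_4\in\C^{(m-r)\times(n-r)}$. By Lemma~\ref{lemma additivity right}, for matrices in this form one has $(A+B)^\dag=A^\dag+B^\dag$ exactly when $B_2=0$. Hence the entire theorem reduces to proving that the stated condition $A^*B(I_n-Q_A)=0$ is itself equivalent to $B_2=0$.

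To establish this, I would simply evaluate $A^*B(I_n-Q_A)$ in the $U,V$ coordinates. From \eqref{MP respect SVD} one gets $Q_A=A^\dag A=V\begin{bmatrix}I_r&0\\0&0\end{bmatrix}V^*$, so that $I_n-Q_A=V\begin{bmatrix}0&0\\0&I_{n-r}\end{bmatrix}V^*$. Using $A^*=V\begin{bmatrix}\Sigma^*&0\\0&0\end{bmatrix}U^*$ together with the block form of $B$, a short multiplication gives
\[
A^*B=V\begin{bmatrix}0 & \Sigma^*B_2\\ 0 & 0\end{bmatrix}V^*.
\]
The crucial observation is that the nonzero block of $A^*B$ sits in the $(1,2)$ position, so $A^*B\,Q_A=0$ automatically under the hypothesis; consequently right-multiplying by $I_n-Q_A$ leaves $A^*B$ unchanged, and
\[
A^*B(I_n-Q_A)=V\begin{bmatrix}0 & \Sigma^*B_2\\ 0 & 0\end{bmatrix}V^*.
\]

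Finally, because $V$ is unitary this matrix vanishes if and only if $\Sigma^*B_2=0$, and the nonsingularity of $\Sigma$ (hence of $\Sigma^*$) forces $B_2=0$; conversely $B_2=0$ makes the expression zero. Combining this equivalence with Lemma~\ref{lemma additivity right} yields the theorem. I expect no real obstacle here: all the content lies in selecting the canonical form of Theorem~\ref{right canonical form} and invoking Lemma~\ref{lemma additivity right}, after which everything is a routine block computation. The only subtlety worth recording is that the projector factor $I_n-Q_A$ does not alter $A^*B$ in these coordinates, so that $A^*B(I_n-Q_A)=0$ is exactly the coordinate-free way of writing the block equation $\Sigma^*B_2=0$.
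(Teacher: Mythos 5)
Your proof is correct and follows essentially the same route as the paper: both reduce, via the canonical form of Theorem \ref{right canonical form}, to the block condition $B_2=0$ of Lemma \ref{lemma additivity right}, and then verify by block multiplication that the stated condition is equivalent to $B_2=0$. The only cosmetic difference is that the paper evaluates the conjugate-transposed quantity $(I_n-Q_A)B^*P_A$ (recycling \eqref{corollary eq1} and the equivalence $(I_n-Q_A)B^*A=0 \Leftrightarrow A^*B(I_n-Q_A)=0$), whereas you compute $A^*B(I_n-Q_A)$ directly.
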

\begin{proof}
As $A\perp_{*,r} B$, from Theorem \ref{right canonical form} we have
that $A$ and $B$ can be written of the forms \eqref{A and B right}. Thus, from  \eqref{corollary eq1} we obtain
\begin{equation*}\label{additivity main theorem}
(I_n-Q_A)B^*P_A =V\begin{bmatrix}
 0 & 0 \\
  B_2^* & 0
 \end{bmatrix}U^*.
 \end{equation*}
 Now, the result  follows from Lemma \ref{lemma additivity right} and the fact that $(I_n-Q_A)B^*A=0$ is equivalent to $(I_n-Q_A)B^*P_A=0$.
\end{proof}

One can obtain a similar result for left $*$-orthogonal matrices.

\begin{lemma}\label{lemma additivity left} Let $A, B\in \Cm$ be two matrices of the forms \eqref{B respect to A.}. Then $(A+B)^\dag=A^\dag + B^\dag$ if and only if $B_3=0$.
\end{lemma}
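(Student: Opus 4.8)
The plan is to mirror the structure of Lemma~\ref{lemma additivity right} exactly, but now working with the \emph{left} canonical form \eqref{B respect to A.} instead of the right one. So the first step is to write down the three ingredients I need: the Moore--Penrose inverse $A^\dag$ from \eqref{MP respect SVD}, the formula for $B^\dag$ from the displayed expression in Theorem~\ref{left canonical form}, and the Moore--Penrose inverse of $A+B$. For this last piece I would form
\[
A+B = U\begin{bmatrix} \Sigma & 0 \\ B_3 & B_4 \end{bmatrix} V^*,
\]
which is block \emph{lower} triangular with nonsingular leading block $\Sigma$. Since Lemma~\ref{MP triangular} is stated for block upper triangular matrices, I would either transpose into that setting or, more cleanly, apply Lemma~\ref{MP triangular} to $(A+B)^*$ and conjugate; alternatively, the left-sided analogue of \eqref{MP of A+B} can be obtained by the same direct verification that produced Lemma~\ref{MP triangular}. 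The natural abbreviations here will be $\Delta_{A+B} := (\Sigma^*\Sigma + \Omega_{A+B}^*\Omega_{A+B})^{-1}$ with $\Omega_{A+B} := \overline{P}_{B_4} B_3$, so that the roles of rows and columns are swapped relative to the right-orthogonal case.

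Next I would set $(A+B)^\dag = A^\dag + B^\dag$ and compare the two block matrices entry by entry, exactly as in equation \eqref{addtivity sum eq1}. Using that $\Sigma$ and $\Delta_{A+B}$ are nonsingular, the top-left block comparison will force a relation of the form $\Delta_{A+B}\,\Sigma^*\Sigma = I_r$ (or its conjugate), and feeding the definition $\Delta_{A+B} = (\Sigma^*\Sigma + \Omega_{A+B}^*\Omega_{A+B})^{-1}$ back in will yield $\Omega_{A+B}^*\Omega_{A+B} = 0$, hence $\Omega_{A+B} = 0$. Then a second block comparison (the left analogue of \eqref{add 2}) combined with $\Omega_{A+B} = \overline{P}_{B_4}B_3 = 0$ will collapse to $B_3 = 0$. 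The converse direction is the easy one: substituting $B_3 = 0$ makes $A$ and $B$ block-diagonal relative to the $U,V$ partition, the cross terms vanish, and all four block identities hold by inspection.

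The step I expect to require the most care is getting the left-triangular Moore--Penrose formula right, since Lemma~\ref{MP triangular} is phrased only for the upper-triangular case; a sign or transpose slip in translating to the lower-triangular block form would propagate through the whole block comparison. Concretely, I must make sure that $\Omega_{A+B}$ is built from $\overline{P}_{B_4}$ acting on $B_3$ (a projector onto $\Nu(B_4^*)$ multiplying the off-diagonal block) rather than $\overline{Q}$, and that the Gram-type matrix inverted in $\Delta_{A+B}$ involves $\Sigma^*\Sigma$ rather than $\Sigma\Sigma^*$. Everything after that is routine, parallel to the proof of Lemma~\ref{lemma additivity right}. It is worth noting that this lemma then feeds the left analogue of Theorem~\ref{theorem additivity right}, where the condition $B_3 = 0$ should translate into $\overline{P}_A B A^* = 0$, i.e. $BA^*(I_m - P_A) = 0$ in the left setting, by the same projector computation used in \eqref{corollary eq1}.
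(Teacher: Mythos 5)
Your proof is correct and follows exactly the route the paper intends (the paper omits this proof as ``similar'' to Lemma~\ref{lemma additivity right}): pass to the block upper triangular matrix $(A+B)^*$, apply Lemma~\ref{MP triangular} and conjugate back, with the correctly identified quantities $\Omega_{A+B}=\overline{P}_{B_4}B_3$ and $\Delta_{A+B}=(\Sigma^*\Sigma+\Omega_{A+B}^*\Omega_{A+B})^{-1}$, then compare blocks to get $\Omega_{A+B}=0$ and $B_4^\dag B_3=0$, hence $B_3=P_{B_4}B_3=B_4(B_4^\dag B_3)=0$. One slip in your closing side remark (not part of the lemma's proof): the condition in Theorem~\ref{theorem additivity left} must remain $(I_m-P_A)BA^*=0$, i.e.\ $\overline{P}_A BA^*=0$; your rephrasing $BA^*(I_m-P_A)=0$ is not equivalent --- under $A\perp_{*,l}B$ one has $BA^*(I_m-P_A)=U\begin{bmatrix} 0 & 0\\ B_3\Sigma^* & 0\end{bmatrix}\begin{bmatrix} 0 & 0\\ 0 & I_{m-r}\end{bmatrix}U^*=0$ identically, so it characterizes nothing.
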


\begin{theorem}\label{theorem additivity left} Let $A, B\in \Cm$ be such that $A\perp_{*,l} B$. Then $(A+B)^\dag=A^\dag + B^\dag$ if and only if $(I_m-P_A)BA^*=0$.
\end{theorem}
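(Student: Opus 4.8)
The plan is to mirror the proof of Theorem~\ref{theorem additivity right}, transporting the problem to the canonical form of a left $*$-orthogonal pair and then reading off additivity from Lemma~\ref{lemma additivity left}. Since $A\perp_{*,l}B$, Theorem~\ref{left canonical form} lets me write $A$ and $B$ simultaneously as in \eqref{B respect to A.}, with $A=U\begin{bmatrix}\Sigma & 0\\0&0\end{bmatrix}V^*$ and $B=U\begin{bmatrix}0&0\\B_3&B_4\end{bmatrix}V^*$, where $\Sigma$ is nonsingular. In this coordinate system Lemma~\ref{lemma additivity left} already identifies the target: $(A+B)^\dag=A^\dag+B^\dag$ holds precisely when $B_3=0$. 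Hence everything reduces to showing that the stated condition $(I_m-P_A)BA^*=0$ is, in these coordinates, equivalent to $B_3=0$.

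The key step is therefore a single block computation, the left-handed counterpart of \eqref{corollary eq1}. From the SVD I have $P_A=U\begin{bmatrix}I_r&0\\0&0\end{bmatrix}U^*$, so $I_m-P_A=\overline{P}_A=U\begin{bmatrix}0&0\\0&I_{m-r}\end{bmatrix}U^*$, while $A^*=V\begin{bmatrix}\Sigma^*&0\\0&0\end{bmatrix}U^*$. Multiplying out gives $BA^*=U\begin{bmatrix}0&0\\B_3\Sigma^*&0\end{bmatrix}U^*$ and then $(I_m-P_A)BA^*=U\begin{bmatrix}0&0\\B_3\Sigma^*&0\end{bmatrix}U^*$ (the projector $\overline{P}_A$ acts as the identity on the relevant block, which is consistent with $P_AB=0$ under left $*$-orthogonality). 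Because $U$ is unitary and $\Sigma$, hence $\Sigma^*$, is nonsingular, this matrix vanishes if and only if $B_3=0$. Combining with Lemma~\ref{lemma additivity left} yields the claimed equivalence $(A+B)^\dag=A^\dag+B^\dag \Leftrightarrow (I_m-P_A)BA^*=0$.

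I expect no genuine obstacle, only the bookkeeping of getting $P_A$ and the block products right. One tempting shortcut is to invoke duality via Proposition~\ref{trivial 1}(i), applying Theorem~\ref{theorem additivity right} to the right $*$-orthogonal pair $B^*,A^*$ and using $((A+B)^\dag)^*=(A^\dag+B^\dag)^*$. This does establish the additivity equivalence, but it delivers the condition in the form $BA^*(I_m-P_B)=0$, phrased through $P_B$ rather than $P_A$; reconciling this with the stated $(I_m-P_A)BA^*=0$ needs a further argument (both ultimately reduce to $B_3=0$, but not transparently). For this reason I would carry out the direct block computation above rather than rely on the duality.
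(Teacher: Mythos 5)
Your proof is correct and is exactly the argument the paper intends: the paper omits the proof of this theorem (and of Lemma~\ref{lemma additivity left}), remarking only that one obtains it as the mirror of Theorem~\ref{theorem additivity right}, and your route — canonical form from Theorem~\ref{left canonical form}, the block computation $(I_m-P_A)BA^*=U\begin{bmatrix}0&0\\B_3\Sigma^*&0\end{bmatrix}U^*$, and Lemma~\ref{lemma additivity left} reducing additivity to $B_3=0$ — is precisely that mirrored argument. The computation and the use of nonsingularity of $\Sigma^*$ are both sound, so there is nothing to fix.
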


Note that  theorems \ref{theorem additivity right} and \ref{theorem additivity left} generalize well-known results about the additivity of the Moore-Penrose inverse  of the sum of two  $*$-orthogonal matrices.
\begin{corollary} Let $A, B\in \Cm$ be such that $A\perp_{*} B$. Then $(A+B)^\dag=A^\dag + B^\dag$.
\end{corollary}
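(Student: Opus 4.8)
The plan is to obtain this as an immediate specialization of the one-sided additivity results already proved, most directly Theorem \ref{theorem additivity right} (Theorem \ref{theorem additivity left} would serve equally well). The guiding observation is that two-sided $*$-orthogonality is strictly stronger than either one-sided notion, so the single hypothesis $A\perp_* B$ simultaneously delivers the standing assumption of the one-sided theorem \emph{and} the extra identity that theorem asks us to check.

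First I would apply Proposition \ref{trivial 1}(ii), which gives that $A\perp_* B$ is equivalent to the conjunction of $A\perp_{*,l} B$ and $A\perp_{*,r} B$. In particular $A\perp_{*,r} B$ holds, placing us in the hypothesis of Theorem \ref{theorem additivity right}; that theorem then reduces the desired equality $(A+B)^\dag = A^\dag + B^\dag$ to verifying the single condition $A^*B(I_n - Q_A) = 0$. For the second step I would invoke the remaining, \emph{left}, half of the hypothesis: by Definition \ref{left and right *orthogonal}(a), the relation $A\perp_{*,l} B$ is exactly $A^*B = 0$, and substituting this into the condition above makes it hold trivially. Theorem \ref{theorem additivity right} then yields the dagger-additivity.

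I expect no genuine obstacle, since all the real work lives in the two preceding additivity theorems; the corollary merely records how the two-sided relation feeds them. The one point worth stating cleanly is the bookkeeping of which half of $A\perp_* B$ plays which role, namely that $BA^*=0$ supplies the right $*$-orthogonality hypothesis while $A^*B=0$ annihilates the residual factor $A^*B(I_n-Q_A)$. A mirror-image argument through Theorem \ref{theorem additivity left} works identically, using $A^*B=0$ for the left hypothesis and $BA^*=0$ to kill the factor in $(I_m-P_A)BA^*=0$.
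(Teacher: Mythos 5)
Your proposal is correct and follows essentially the same route as the paper: the paper likewise invokes Proposition \ref{trivial 1} to split $A\perp_{*} B$ into its left and right halves, feeds the right half into Theorem \ref{theorem additivity right}, and uses the left half $A^*B=0$ to dispose of the residual condition $A^*B(I_n-Q_A)=0$ (written in the paper in the equivalent conjugate-transposed form $(I_n-Q_A)B^*A=0$). There is no gap.
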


\begin{proof}
As $A\perp_{*} B$, Proposition \ref{trivial 1} implies $A\perp_{*,l} B$ and $A\perp_{*,r} B$. Moreover, $A\perp_{*,l} B$ implies $(I_n-Q_A)B^*A=0$. Thus, from Thoerem \ref{theorem additivity right} we have $(A+B)^\dag=A^\dag + B^\dag$.
\end{proof}

We finish this section providing an expression for $(A+B)^\dag$ when $A$ and $B$ are left (resp. right) $*$-orthogonal matrices with the requirement that their ranges are disjoints, that is, $\Ra(A)\cap \Ra(B)=\{0\}$.  Before we recall the following fact about  the Moore-Penrose inverse of the sum of two Hermitian positive semidefinite matrices. 

\begin{lemma}\cite[Theorem 4]{BaTr21}\label{thm 4 of BT} Let $A, B\in \Cm$ be such that  $\Ra(A)\cap \Ra(B)=\{0\}$. Then $(AA^*+BB^*)^\dag=[(\overline{P}_A B)(\overline{P}_A B)^*]^\dag+[(\overline{P}_B A)(\overline{P}_B A)^*]^\dag$.
\end{lemma}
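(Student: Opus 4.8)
The plan is to set $M:=AA^*+BB^*$, $C:=\overline{P}_A B$, $D:=\overline{P}_B A$, and to prove directly that the Hermitian matrix $G:=(CC^*)^\dag+(DD^*)^\dag$ (a sum of pseudoinverses of Hermitian matrices) equals $M^\dag$. Since $M=[A\ B][A\ B]^*$, one has $\Ra(M)=\Ra(A)+\Ra(B)=:\mathcal{W}$, and the hypothesis $\Ra(A)\cap\Ra(B)=\{0\}$ turns this into an (oblique) direct sum $\mathcal{W}=\Ra(A)\oplus\Ra(B)$. Because $G$ and $M$ are Hermitian, I would argue that it suffices to establish the two facts $\Ra(G)\subseteq\mathcal{W}$ and $MG=P_{\mathcal{W}}$, where $P_{\mathcal{W}}$ denotes the orthogonal projector onto $\mathcal{W}$: these, together with $\Ra(M)=\mathcal{W}$, yield all four Penrose equations at once, since $GM=(MG)^*=P_{\mathcal{W}}$, $MGM=P_{\mathcal{W}}M=M$, and $GMG=P_{\mathcal{W}}G=G$. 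The inclusion $\Ra(G)\subseteq\Ra(CC^*)+\Ra(DD^*)=\Ra(C)+\Ra(D)\subseteq\mathcal{W}$ is immediate, so the entire proof reduces to the identity $MG=P_{\mathcal{W}}$.

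First I would record two orthogonal splittings of $\mathcal{W}$. From $[A\ \overline{P}_A B]=[A\ B]\left[\begin{smallmatrix} I & -A^{\dag}B\\ 0 & I\end{smallmatrix}\right]$ and the invertibility of the unit triangular factor, $[A\ C]$ and $[A\ B]$ share the column space $\mathcal{W}$; and since $\Ra(C)\subseteq\Ra(\overline{P}_A)=\Nu(A^*)=\Ra(A)^\perp$, this is an orthogonal splitting $\mathcal{W}=\Ra(A)\oplus^{\perp}\Ra(C)$, whence $P_{\mathcal{W}}=P_A+CC^\dag$. Symmetrically $\mathcal{W}=\Ra(B)\oplus^{\perp}\Ra(D)$ and $P_{\mathcal{W}}=P_B+DD^\dag$.

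Next I would simplify $MG$. The inclusions $\Ra((CC^*)^\dag)=\Ra(C)\subseteq\Nu(A^*)$ and $\Ra((DD^*)^\dag)=\Ra(D)\subseteq\Nu(B^*)$ give $AA^*(CC^*)^\dag=0$ and $BB^*(DD^*)^\dag=0$, hence $MG=BB^*(CC^*)^\dag+AA^*(DD^*)^\dag$. Using $C^\dag=C^*(CC^*)^\dag=B^*\overline{P}_A(CC^*)^\dag=B^*(CC^*)^\dag$ (the last step because $\overline{P}_A$ acts as the identity on $\Ra((CC^*)^\dag)\subseteq\Nu(A^*)$), I obtain $BB^*(CC^*)^\dag=BC^\dag$, and likewise $AA^*(DD^*)^\dag=AD^\dag$. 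Thus the target identity becomes $BC^\dag+AD^\dag=P_{\mathcal{W}}$.

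The crux, and the step I expect to be the main obstacle, is to recognise $AD^\dag$ and $BC^\dag$ as complementary oblique projectors. I would show that $AD^\dag$ fixes $\Ra(A)$, annihilates $\Ra(B)$, and vanishes on $\mathcal{W}^\perp$, so that it is the projector onto $\Ra(A)$ along $\Ra(B)\oplus^{\perp}\mathcal{W}^\perp$. The two ingredients are: (i) $D^\dag P_B=0$, which follows from $D^*P_B=A^*\overline{P}_B P_B=0$; and (ii) $\Nu(D)=\Nu(A)$, which is precisely where the hypothesis $\Ra(A)\cap\Ra(B)=\{0\}$ is used, since $Dx=0$ forces $Ax\in\Ra(B)\cap\Ra(A)=\{0\}$. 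From (i), $AD^\dag B=AD^\dag P_B B=0$ and $AD^\dag P_B A=0$; combined with $A=P_B A+D$ this gives $AD^\dag A=AD^\dag D=AP_{\Ra(D^*)}=A$, the last equality using (ii). On $\mathcal{W}^\perp\subseteq\Ra(D)^\perp=\Nu(D^\dag)$ the map is zero. By the symmetric argument $BC^\dag$ is the projector onto $\Ra(B)$ along $\Ra(A)\oplus^{\perp}\mathcal{W}^\perp$. Adding the two and evaluating on $\Ra(A)$, $\Ra(B)$ and $\mathcal{W}^\perp$ (which together span $\C^m$) shows that $BC^\dag+AD^\dag$ fixes $\mathcal{W}$ and kills $\mathcal{W}^\perp$, that is, equals $P_{\mathcal{W}}$, which closes the proof.
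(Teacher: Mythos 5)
Your proof is correct, but there is nothing in the paper to compare it against: the paper does not prove this lemma at all, it simply imports it by citation from Baksalary and Trenkler \cite[Theorem 4]{BaTr21} (a paper on Moore--Penrose inverses of columnwise partitioned matrices, where the result is obtained within that partitioned-matrix framework). So your argument is a genuine, self-contained addition rather than a rederivation of an in-paper proof.

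Checking the details: the reduction is sound, since with $M=AA^*+BB^*$, $G=(CC^*)^\dag+(DD^*)^\dag$ both Hermitian, the two facts $\Ra(G)\subseteq\mathcal{W}=\Ra(M)$ and $MG=P_{\mathcal{W}}$ do give all four Penrose equations ($GM=(MG)^*=P_{\mathcal{W}}$, $MGM=P_{\mathcal{W}}M=M$, $GMG=P_{\mathcal{W}}G=G$). The simplification $MG=BC^\dag+AD^\dag$ via $AA^*(CC^*)^\dag=0$, $C^\dag=B^*(CC^*)^\dag$ uses only the standard identities $\Ra(X^\dag)=\Ra(X^*)$, $X^\dag=X^*(XX^*)^\dag$, and is correct. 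The crux---that $AD^\dag$ fixes $\Ra(A)$, kills $\Ra(B)$, and kills $\mathcal{W}^\perp$---is verified correctly: (i) $D^\dag P_B=0$ follows from $\Nu(D^\dag)=\Nu(D^*)\supseteq\Ra(P_B)$, and (ii) $\Nu(D)=\Nu(A)$ is exactly where the disjointness hypothesis $\Ra(A)\cap\Ra(B)=\{0\}$ enters, giving $AD^\dag D=AQ_A=A$. Since $\Ra(A)$, $\Ra(B)$, $\mathcal{W}^\perp$ span $\C^{m}$, the identity $BC^\dag+AD^\dag=P_{\mathcal{W}}$ follows, closing the proof. Two small remarks: the orthogonal splittings $P_{\mathcal{W}}=P_A+CC^\dag=P_B+DD^\dag$ that you record in your second paragraph are never actually used in the final argument and could be deleted; and your oblique-projector viewpoint has the side benefit of making transparent why the hypothesis $\Ra(A)\cap\Ra(B)=\{0\}$ cannot be dropped---without it $\Nu(D)$ may be strictly larger than $\Nu(A)$ and $AD^\dag$ no longer fixes $\Ra(A)$.
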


\begin{theorem} Let $A, B\in \Cm$ be such that $A\perp_{*,r} B$ and $\Ra(A)\cap \Ra(B)=\{0\}$. Then
\[(A+B)^\dag=(A+B)^*
([(\overline{P}_A B)(\overline{P}_A B)^*]^\dag+[(\overline{P}_B A)(\overline{P}_B A)^*]^\dag).\]
\end{theorem}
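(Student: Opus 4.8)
The plan is to reduce everything to the Hermitian positive-semidefinite situation already settled in Lemma \ref{thm 4 of BT}. The natural entry point is the standard factorization of the Moore-Penrose inverse $M^\dag = M^*(MM^*)^\dag$, valid for every rectangular matrix $M$. Applying it to $M=A+B$ gives
\[
(A+B)^\dag=(A+B)^*\,[(A+B)(A+B)^*]^\dag,
\]
which already displays the outer factor $(A+B)^*$ occurring in the claimed formula. Thus the whole statement collapses to identifying the middle factor $[(A+B)(A+B)^*]^\dag$ with the bracketed sum on the right-hand side.

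The second step is to simplify the Gram matrix $(A+B)(A+B)^*=AA^*+AB^*+BA^*+BB^*$ using the right $*$-orthogonality hypothesis. By Definition \ref{left and right *orthogonal}, $A\perp_{*,r}B$ means precisely $BA^*=0$; taking conjugate transposes yields $AB^*=(BA^*)^*=0$ as well. Hence both cross terms vanish and
\[
(A+B)(A+B)^*=AA^*+BB^*.
\]

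Finally, the disjointness assumption $\Ra(A)\cap\Ra(B)=\{0\}$ is exactly the hypothesis of Lemma \ref{thm 4 of BT}, which therefore applies verbatim to give
\[
(AA^*+BB^*)^\dag=[(\overline{P}_A B)(\overline{P}_A B)^*]^\dag+[(\overline{P}_B A)(\overline{P}_B A)^*]^\dag.
\]
Substituting this into the factorization from the first step yields the asserted expression for $(A+B)^\dag$.

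I do not expect a genuine obstacle: the argument simply chains three facts, namely the universal identity $M^\dag=M^*(MM^*)^\dag$, the vanishing of the cross terms, and Lemma \ref{thm 4 of BT}. The only point deserving a moment's care is confirming that $A\perp_{*,r}B$ forces \emph{both} $BA^*$ and $AB^*$ to vanish (the former is the definition, the latter is its conjugate), so that the Gram matrix genuinely reduces to $AA^*+BB^*$ and the cited lemma becomes directly applicable; the rest is routine substitution.
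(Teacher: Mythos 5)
Your proposal is correct and follows exactly the same route as the paper's proof: the identity $(A+B)^\dag=(A+B)^*[(A+B)(A+B)^*]^\dag$, the reduction of the Gram matrix to $AA^*+BB^*$ via $BA^*=0$ (hence $AB^*=0$), and then Lemma \ref{thm 4 of BT}. The only difference is that you spell out the vanishing of both cross terms, which the paper leaves implicit.
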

\begin{proof} By definition  $A\perp_{*,r} B$ implies $BA^*=0$. In consequence,
\begin{equation*}
(A+B)^\dag= (A+B)^*[(A+B)(A+B)^*]^\dag =(A+B)^*(AA^*+BB^*)^\dag.
\end{equation*}
Thus, the conclusion follows from Lemma \ref{thm 4 of BT}.
\end{proof}
Using the identity $(A+B)^\dag= [(A+B)^*(A+B)]^\dag (A+B)^*$ , 
one can  obtain a similar result for the left $*$-orthogonality. 
\begin{theorem} Let $A, B\in \Cm$ be such that $A\perp_{*,l} B$ and $\Ra(A^*)\cap \Ra(B^*)=\{0\}$. Then
\[(A+B)^\dag=([(\overline{P}_{A^*} B^*)(\overline{P}_{A^*} B^*)^*]^\dag+[(\overline{P}_{B^*} A^*)(\overline{P}_{B^*} A^*)^*]^\dag) (A+B)^*.\]
\end{theorem}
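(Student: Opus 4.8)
The plan is to treat this as the exact dual of the preceding (right $*$-orthogonal) theorem, simply replacing the factorization $(A+B)^\dag=(A+B)^*[(A+B)(A+B)^*]^\dag$ used there by the left-sided companion identity $(A+B)^\dag=[(A+B)^*(A+B)]^\dag(A+B)^*$, which is suggested in the text immediately before the statement. This identity holds for every matrix, so no hypothesis is consumed in invoking it, and it reduces the whole problem to computing the Moore-Penrose inverse of the Gram matrix $(A+B)^*(A+B)$.

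Next I would simplify $(A+B)^*(A+B)=A^*A+A^*B+B^*A+B^*B$ using the left $*$-orthogonality. By Definition \ref{left and right *orthogonal}, $A\perp_{*,l} B$ means $A^*B=0$; taking conjugate transpose gives $B^*A=(A^*B)^*=0$ as well. Hence both cross terms vanish and $(A+B)^*(A+B)=A^*A+B^*B$. Writing $A^*A=(A^*)(A^*)^*$ and $B^*B=(B^*)(B^*)^*$, this displays the Gram matrix in exactly the Hermitian-positive-semidefinite form $XX^*+YY^*$ to which Lemma \ref{thm 4 of BT} applies, with $X=A^*$ and $Y=B^*$.

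The one point requiring attention is that I must apply Lemma \ref{thm 4 of BT} to the conjugate-transposed pair $(A^*,B^*)$ rather than to $(A,B)$. Its hypothesis then reads $\Ra(A^*)\cap\Ra(B^*)=\{0\}$, which is precisely the standing assumption of the theorem, and its conclusion yields
\[
(A^*A+B^*B)^\dag=[(\overline{P}_{A^*}B^*)(\overline{P}_{A^*}B^*)^*]^\dag+[(\overline{P}_{B^*}A^*)(\overline{P}_{B^*}A^*)^*]^\dag .
\]
Substituting this into $(A+B)^\dag=(A^*A+B^*B)^\dag(A+B)^*$ gives the asserted formula verbatim.

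I do not anticipate a genuine obstacle here: the argument is a short chain of an always-valid factorization, a one-line cancellation of cross terms, and a direct citation of Lemma \ref{thm 4 of BT}. The only place one must be careful is bookkeeping the roles of $A$ and $A^*$ (and correspondingly $\overline{P}_A$ versus $\overline{P}_{A^*}$), so that the projectors appearing in the final formula are those onto the complements of $\Ra(A^*)$ and $\Ra(B^*)$, consistent with having fed the conjugate transposes into the lemma.
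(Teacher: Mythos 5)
Your proposal is correct and follows exactly the route the paper intends: the paper states this theorem immediately after remarking that it follows from the identity $(A+B)^\dag=[(A+B)^*(A+B)]^\dag(A+B)^*$, and your argument—cancelling the cross terms $A^*B=B^*A=0$, then applying Lemma \ref{thm 4 of BT} to the pair $(A^*,B^*)$ under the hypothesis $\Ra(A^*)\cap\Ra(B^*)=\{0\}$—is precisely the dual of the paper's proof of the right $*$-orthogonal version.
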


\section{Group matrices and one-sided core-orthogonality}

We observe that any statement that holds for rectangular matrices also holds for square matrices. But how the things change when we restrict the matrices to be of index $1?$ Indeed, the matrices of index $1$ have the  qualification that they possess a unique group inverse and many things about them become easy to understand and in control when used. In this section, we intend to study the impact on one-sided $*$-orthogonality.

\subsection{Left and right core-orthogonality}

We know that $A$ is orthogonal to $B$ (in the usual sense) if $AB=BA=0$. Whereas  $A$ and $B$ are $*$-orthogonal if $A^*B=0$ and $BA^*=0$, and denoted by $A\perp_*B$. 

Recently, in \cite{FeMa2} the authors introduced the concept of core- orthogonality by using the core inverse $A^{\core}$ instead of the conjugate transpose $A^*$. 

Thus, this approach will lead to a new type of orthogonality for group matrices. 

 \begin{definition}\cite{FeMa2} \label{def PCO} Let $A,B\in \cmbis$. Then $A$ is core-orthogonal to $B$ (denoted by $A\perp_{\core} B$) when
\begin{equation*} \label{pco1}
 A^{\core}B=0\quad \text{and}\quad BA^{\core}=0.
\end{equation*}
\end{definition}

In \cite{FeMa2}, by using the interesting feature of the core inverse 
$\Ra(A^{\core})=\Ra(A)$ and $\Nu(A^{\core})=\Nu(A^*)$
it was proved that Definition \ref{def PCO} can be rewritten as 

\begin{equation}\label{alternative def}
A\perp_{\core} B ~\Leftrightarrow~ A^*B=0 ~~\text{and}~~ BA=0.
\end{equation} 
So, the core-orthogonality may be regarded as a orthogonality between the usual orthogonality   and the $*$-orthogonality for group matrices.
Moreover, by using Definition \ref{left and right *orthogonal} we have 
\begin{equation*}\label{alternative def 2}
A\perp_{\core} B ~\Leftrightarrow~ A\perp_{*,l} B ~~\text{and}~~ A^*\perp_{*,r} B.
\end{equation*} 
Thus, it is clear that $A\perp_{*,l} B$ does not necessarily imply  $A$ core-orthogonal to $B$. Where as $A$  core-orthogonal to $B$ implies $A\perp_{*,l} B.$ We give the following example:
 \begin{example} Let $A= \begin{bmatrix}
                          1 & 1 \\
                          0 & 0 
                        \end{bmatrix},~ B=\begin{bmatrix}
                                            0 & 0 \\
                                            1 & 0 
                                          \end{bmatrix}.$ 
Then $A^*B=0, ~A^{\core}B=0$ and $BA^{\core}\neq 0.$
 \end{example}
 Similar deductions are possible for $A\perp_{*,r} B$.
 
We next give a new definition in view of above discussion:

 \begin{definition}\label{left and right core-orthogonal} Let $A,B\in \cmbis$. We say that
\begin{enumerate}[\rm (a)]
\item $A$ is left core-orthogonal to $B$ and denoted by $A\perp_{\tiny\core,l} B$ if  $A^{\core}B=0$.
\item $A$ is right  core-orthogonal to $B$ and denoted by $A\perp_{\core,r} B$ if  $BA^{\core}=0$.
\end{enumerate}
\end{definition}
 Note that for  $A,B\in \cmbis$ we have
\begin{equation}\label{equivalence core orthogonal}
 A\perp_{{\core}} B ~\Leftrightarrow~
 A\perp_{{\core},l} B ~ \text{and} ~ A\perp_{{\core},r} B.
\end{equation}

The previous observations lead to the following result.
 \begin{theorem} \label{sb 1}Let $A,B\in \cmbis.$ Then the following are equivalent:
 \begin{enumerate}[(i)]
\item $A\perp_{\core,l} B$ (resp. $A\perp_{\core,r} B$ );
\item $A\perp_{*,l} B$ (resp. $A^*\perp_{*,r} B$);
\item $B\perp_{\core,l} A$ (resp. $B \perp_{\core,r} A^*$).
\end{enumerate}
\end{theorem}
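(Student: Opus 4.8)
The plan is to prove all equivalences by unwinding the definitions and invoking the key identity \eqref{alternative def}, which already rewrote core-orthogonality in terms of the conjugate transpose. I would treat the left and right versions in parallel (the parenthetical statements), focusing on the left version and noting that the right version follows by the symmetric argument.

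First I would establish the equivalence (i) $\Leftrightarrow$ (ii). For the left case, I recall from the discussion preceding Definition \ref{left and right core-orthogonal} that the feature $\Nu(A^{\core})=\Nu(A^*)$ of the core inverse gives $\Ra(A^{\core})=\Ra(A)$ and that $A\perp_{\core} B$ was shown in \eqref{alternative def} to be equivalent to the pair $A^*B=0$ and $BA=0$. Isolating just the first condition $A^{\core}B=0$, I would argue that $A^{\core}B=0 \Leftrightarrow \Ra(B)\subseteq \Nu(A^{\core})=\Nu(A^*) \Leftrightarrow A^*B=0$, which is precisely $A\perp_{*,l}B$ by Definition \ref{left and right *orthogonal}(a). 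This is the cleanest link: left core-orthogonality discards the $BA=0$ half of \eqref{alternative def} and what survives is exactly left $*$-orthogonality. For the right case, I would similarly use $BA^{\core}=0 \Leftrightarrow \Ra((A^{\core})^*)\subseteq \Nu(B)$; here the relevant range is $\Ra((A^{\core})^*)=\Nu(A^{\core})^\perp=\Nu(A^*)^\perp=\Ra(A)=\Ra(A^{**})$, so that $BA^{\core}=0$ reduces to $B(A^*)^*=0$, i.e.\ $A^*\perp_{*,r}B$ in the sense of Definition \ref{left and right *orthogonal}(b) applied to $A^*$.

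Next I would prove (ii) $\Leftrightarrow$ (iii). For the left case this is almost immediate from the built-in symmetry of left $*$-orthogonality recorded in the Remark after Definition \ref{left and right *orthogonal}: $A\perp_{*,l}B \Leftrightarrow B\perp_{*,l}A$, and then applying the already-established (i) $\Leftrightarrow$ (ii) in the other direction converts $B\perp_{*,l}A$ back into $B\perp_{\core,l}A$. For the right case I would chase $A^*\perp_{*,r}B \Leftrightarrow B\perp_{\core,r}A^*$ using the same (i) $\Leftrightarrow$ (ii) equivalence with the roles of the matrices suitably relabeled, taking care that the asterisk lands on the correct argument as dictated by the statement.

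The main obstacle I anticipate is purely bookkeeping rather than conceptual: keeping the placement of the conjugate transpose consistent across the right-hand versions, since the right core-orthogonality translates to a $*$-orthogonality statement about $A^*$ rather than $A$, and the corresponding symmetry must be applied to the right object. I would also need to confirm that all of $A,B,A^*$ remain in $\cmbis$ where the core inverse is invoked (for $A\in\cmbis$ one has $A^*\in\cmbis$ as well, since $\ind(A^*)=\ind(A)\le 1$), so that writing $B\perp_{\core,r}A^*$ is legitimate. Once the transpose placement and the membership in $\cmbis$ are tracked carefully, each implication is a one-line range/nullspace inclusion, so the proof should be short.
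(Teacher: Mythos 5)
Your overall route---translating core-orthogonality into $*$-orthogonality through $\Ra(A^{\core})=\Ra(A)$ and $\Nu(A^{\core})=\Nu(A^*)$, then invoking the symmetry of the one-sided $*$-relations---is exactly what the paper has in mind; the paper offers no separate proof, presenting the theorem as a consequence of ``the previous observations.'' Your argument for (i) $\Leftrightarrow$ (ii) in both cases, and for the entire left-hand chain, is correct. (One small inaccuracy: $BA^{\core}=0$ is equivalent to $\Ra(A^{\core})\subseteq\Nu(B)$, not to $\Ra((A^{\core})^*)\subseteq\Nu(B)$; your step survives only because $\Ra((A^{\core})^*)=\Nu(A^{\core})^{\perp}=\Nu(A^*)^{\perp}=\Ra(A)=\Ra(A^{\core})$, i.e.\ $A^{\core}$ happens to be EP.)

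The genuine gap is the right-hand version of (ii) $\Leftrightarrow$ (iii), precisely the step you dismiss as bookkeeping. Carry out the relabelling you propose: by Definition \ref{left and right core-orthogonal}(b), $B\perp_{\core,r}A^*$ means $A^*B^{\core}=0$, and since $\Ra(B^{\core})=\Ra(B)$ this is equivalent to $A^*B=0$, which is the \emph{left} condition $A\perp_{*,l}B$. On the other hand, (i) and (ii) of the right-hand chain are each equivalent to $BA=0$ (as your own (i) $\Leftrightarrow$ (ii) argument shows). No placement of asterisks reconciles the two, because they are inequivalent: take
\begin{equation*}
A=\begin{bmatrix}1&1\\0&0\end{bmatrix},\qquad B=\begin{bmatrix}0&0\\1&1\end{bmatrix},
\end{equation*}
both idempotent and hence in $\C_2^{\cm}$; then $A^*B=0$, so $B\perp_{\core,r}A^*$ holds, while $BA=B\neq 0$, so $A\perp_{\core,r}B$ and $A^*\perp_{*,r}B$ both fail. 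Thus item (iii) of the right-hand chain is false as literally printed---the statement is missing a conjugate transpose---and your proof cannot close this step. The condition actually equivalent to (i) and (ii) is $B^*\perp_{\core,r}A^*$ (note $B^*\in\cmbis$, just as you observed for $A^*$): indeed $B^*\perp_{\core,r}A^*$ means $A^*(B^*)^{\core}=0$, which by $\Ra((B^*)^{\core})=\Ra(B^*)$ is equivalent to $A^*B^*=0$, i.e.\ to $BA=0$. A faithful execution of your own method therefore proves the corrected chain $A\perp_{\core,r}B\Leftrightarrow A^*\perp_{*,r}B\Leftrightarrow B^*\perp_{\core,r}A^*$, and your write-up should have flagged this discrepancy rather than asserted the literal equivalence.
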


\begin{remark}\label{remark symmetry} Note that the left core-orthogonality has the  symmetry property  while the right core-orthogonality is not.  
\end{remark}

A simultaneous form of a pair of  left (resp., right) core-orthogonal matrices is developed below using the core-EP decomposition. 

\begin{theorem}\label{sb 2} Let $A,B\in \cmbis$ and $r=\rk(A)$. Then the following are equivalent:
 \begin{enumerate}[(i)]
\item $A\perp_{\core,l} B$;
\item There exists a unitary matrix $U\in \Cnn$ and a nonsingular  matrix $T\in \Crr$  such that
\begin{equation}\label{B respect to A}
 A=U\begin{bmatrix}
 T& S \\
  0 & 0
 \end{bmatrix}U^*, \quad B=U\begin{bmatrix}
               0 & 0 \\
               B_3 & B_4
              \end{bmatrix} U^*,
\end{equation}
where $(I_{n-r}-B_4B_4^{\core})B_3=0$ and $B_4\in \C^{\cm}_{n-r}$.
\end{enumerate}
Further,
\begin{equation}\label{bcore}
 B^{\core}=U\begin{bmatrix}
                      0 & 0 \\
                      0 & B^{\core}_4
                    \end{bmatrix}U^*.
 \end{equation}
 \end{theorem}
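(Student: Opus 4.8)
The plan is to prove the two equivalences (i) $\Leftrightarrow$ (ii) and the formula \eqref{bcore} by reducing everything to the core-EP machinery already set up in Lemma \ref{CEPD} and Lemma \ref{core triangular 2}. Since $A\in\cmbis$, the core-EP decomposition \eqref{core EP decomposition} with $N=0$ gives the canonical form \eqref{HS}, so I can immediately write $A=U\begin{bmatrix} T & S \\ 0 & 0\end{bmatrix}U^*$ with $T\in\Crr$ nonsingular and a \emph{single} unitary $U$. The whole point of working with group matrices (rather than arbitrary rectangular ones as in Theorem \ref{right canonical form}) is that one unitary $U$ simultaneously reduces $A$ on both sides, which is what lets the analogous form propagate to $B$.

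First I would establish (i) $\Rightarrow$ (ii). By Theorem \ref{sb 1}, the hypothesis $A\perp_{\core,l} B$ is equivalent to $A\perp_{*,l} B$, i.e. $A^*B=0$. Writing $A$ in the form \eqref{HS} and partitioning $B=U\begin{bmatrix} B_1 & B_2\\ B_3 & B_4\end{bmatrix}U^*$ conformably, the condition $A^*B=0$ forces the top block rows governed by $T^*,S^*$ to vanish; using nonsingularity of $T$ this yields $B_1=0$ and $B_2=0$, giving the claimed shape $B=U\begin{bmatrix} 0 & 0\\ B_3 & B_4\end{bmatrix}U^*$. The remaining content of (ii) is that $B\in\cmbis$ translates into the two conditions $B_4\in\C^{\cm}_{n-r}$ and $(I_{n-r}-B_4B_4^{\core})B_3=0$; this is exactly Lemma \ref{core triangular 2} applied to the lower-triangular block matrix $\begin{bmatrix} 0 & 0\\ B_3 & B_4\end{bmatrix}$ (with $P=0$, $R=B_4$, $Q=B_3$), whose core inverse exists iff $R^{\core}$ exists, $P^{\core}=0$ exists trivially, and $(I_{n-r}-RR^{\core})Q=0$. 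This same lemma then delivers \eqref{bcore}, since the formula $\begin{bmatrix} P^{\core} & 0\\ -R^{\core}QP^{\core} & R^{\core}\end{bmatrix}$ collapses to $\begin{bmatrix} 0 & 0\\ 0 & B_4^{\core}\end{bmatrix}$ when $P=0$.

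For (ii) $\Rightarrow$ (i) I would simply reverse the computation: from the explicit forms \eqref{B respect to A} one checks $A^{\core}B=0$. Using the core inverse of $A$ from \eqref{canonical form CEP}, namely $A^{\core}=U\begin{bmatrix} T^{-1} & 0\\ 0 & 0\end{bmatrix}U^*$, the product $A^{\core}B$ picks out only the top block row of $B$, which is zero; hence $A\perp_{\core,l} B$. I would remark that the side conditions $B_4\in\C^{\cm}_{n-r}$ and $(I_{n-r}-B_4B_4^{\core})B_3=0$ are needed here only to guarantee $B\in\cmbis$, i.e. that $B^{\core}$ is even defined and has the stated form, not for the orthogonality itself.

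The main obstacle, and the step deserving the most care, is the bookkeeping in identifying when $B\in\cmbis$ via Lemma \ref{core triangular 2}: one must verify that the existence of $B^{\core}$ for the block matrix $\begin{bmatrix} 0 & 0\\ B_3 & B_4\end{bmatrix}$ is genuinely equivalent to the pair of conditions stated in (ii), and in particular that the vacuous block $P=0$ does not create a hidden obstruction. Since Lemma \ref{core triangular 2} is stated for the form $\begin{bmatrix} P & 0\\ Q & R\end{bmatrix}$ with $P\in\Crr$, I would be careful that the roles of the blocks match (here the nonsingular/invertible analogue is absent because $P=0$), and note that $0^{\core}=0$ trivially exists, so the only substantive requirements are exactly $B_4^{\core}$ existing and $(I_{n-r}-B_4B_4^{\core})B_3=0$. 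Everything else is routine matrix multiplication in the $U$-conjugated basis.
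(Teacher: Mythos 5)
Your proposal is correct and follows essentially the same route as the paper: write $A$ in the form \eqref{HS} via the core-EP decomposition, partition $B$ conformably so that left core-orthogonality kills the blocks $B_1,B_2$, and then invoke Lemma \ref{core triangular 2} with $P=0$, $Q=B_3$, $R=B_4$ to obtain both the conditions in (ii) and the formula \eqref{bcore}. The only cosmetic difference is that you pass through $A^*B=0$ (via Theorem \ref{sb 1}) while the paper computes $A^{\core}B=0$ directly from \eqref{canonical form CEP}; both computations force the same vanishing blocks.
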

 
\begin{proof} We consider $A$ as in \eqref{HS}.
Partitioning $B$ according to partition of $A$, we have 
$B= U\begin{bmatrix}
    B_1 & B_2\\
    B_3 & B_4 
  \end{bmatrix}U^*$,
where $B_1\in \C^{r\times r}$. From Definition \ref{left and right core-orthogonal} and \eqref{canonical form CEP}, a straightforward computation shows that
\[A^{\core}B=0 \Leftrightarrow B= 
U\begin{bmatrix}
    0 & 0\\
    B_3 & B_4 
  \end{bmatrix}U^*.\]
Since $B\in \cmbis$, it is clear that $B^{\core}$ exists. Thus, by Lemma \ref{core triangular 2} we have that $B_4^{\core}$ exists and $(I_{n-r}-B_4B_4^{\core})B_3=0$.  Moreover,  it follows \eqref{bcore}.
\end{proof}

A similar result for right core-orthogonal matrices can be obtained by using Lemma \ref{core triangular} instead of Lemma \ref{core triangular 2}.

\begin{theorem}\label{sb 2 right} Let $A,B\in \cmbis$ and $r=\rk(A)$. Then the following are equivalent:
 \begin{enumerate}[(i)]
\item $A\perp_{\core,r} B$;
\item There exists a unitary matrix $U\in \Cnn$  and a nonsingular  matrix $T\in \Crr$  such that
\begin{equation}\label{A respect B right}
 A=U\begin{bmatrix}
 T& S \\
  0 & 0
 \end{bmatrix}U^*, \quad B=U\begin{bmatrix}
               0 & 0\\
               0 & B_4
              \end{bmatrix} U^*,
\end{equation}
where  $B_4\in \C^{\cm}_{n-r}$.
\end{enumerate}
Further,
\begin{equation*} 
 B^{\core}=U\begin{bmatrix}
                      0 & 0 \\
                      0 & B^{\core}_4
                    \end{bmatrix}U^*.
 \end{equation*}
 \end{theorem}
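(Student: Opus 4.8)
The plan is to mirror the proof of Theorem~\ref{sb 2}, substituting the right core-orthogonality condition $BA^{\core}=0$ for the left one $A^{\core}B=0$, and invoking Lemma~\ref{core triangular} (the genuinely upper-triangular case) in place of Lemma~\ref{core triangular 2}. First I would write $A$ in its core-EP canonical form \eqref{HS}, namely $A=U\left[\begin{smallmatrix} T & S \\ 0 & 0 \end{smallmatrix}\right]U^*$ with $T\in\Crr$ nonsingular and $U$ unitary, so that by \eqref{canonical form CEP} we have $A^{\core}=U\left[\begin{smallmatrix} T^{-1} & 0 \\ 0 & 0 \end{smallmatrix}\right]U^*$. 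Then I would partition $B$ conformably as $B=U\left[\begin{smallmatrix} B_1 & B_2 \\ B_3 & B_4 \end{smallmatrix}\right]U^*$ with $B_1\in\C^{r\times r}$.

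For the implication (i)~$\Rightarrow$~(ii), I would compute $BA^{\core}$ directly. Because $A^{\core}$ has its only nonzero block $T^{-1}$ in the top-left corner, the product $BA^{\core}=U\left[\begin{smallmatrix} B_1 T^{-1} & 0 \\ B_3 T^{-1} & 0 \end{smallmatrix}\right]U^*$, and setting this to zero forces $B_1 T^{-1}=0$ and $B_3 T^{-1}=0$; the nonsingularity of $T^{-1}$ then yields $B_1=0$ and $B_3=0$. This gives $B=U\left[\begin{smallmatrix} 0 & B_2 \\ 0 & B_4 \end{smallmatrix}\right]U^*$. Here the one subtle point is that the statement of the theorem asserts $B_2=0$ as well (the displayed form \eqref{A respect B right} has a zero in the upper-right block), so I must still eliminate $B_2$. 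I expect this to be the main obstacle, and it is resolved by the hypothesis $B\in\cmbis$ together with Lemma~\ref{core triangular}: since $B^{\core}$ exists and (reading $B$ as a block upper-triangular matrix with top-left block the zero $r\times r$ matrix $P=0$) the lemma's solvability condition $(I_r-PP^{\core})B_2=0$ reduces to $(I_r-0)B_2=I_r B_2=B_2=0$, because $0^{\core}=0$ and hence $PP^{\core}=0$. Thus the existence of the core inverse of $B$ itself forces $B_2=0$, giving exactly \eqref{A respect B right}, and Lemma~\ref{core triangular} simultaneously delivers the block form of $B^{\core}$ with its only nonzero block $B_4^{\core}$ in the lower-right corner.

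For the converse (ii)~$\Rightarrow$~(i), I would simply multiply the canonical forms: with $B=U\left[\begin{smallmatrix} 0 & 0 \\ 0 & B_4 \end{smallmatrix}\right]U^*$ and $A^{\core}=U\left[\begin{smallmatrix} T^{-1} & 0 \\ 0 & 0 \end{smallmatrix}\right]U^*$, the product $BA^{\core}=U\left[\begin{smallmatrix} 0 & 0 \\ 0 & 0 \end{smallmatrix}\right]U^*=0$ is immediate, so $A\perp_{\core,r}B$. Finally, the formula for $B^{\core}$ follows at once from applying \eqref{canonical form CEP} (or directly Lemma~\ref{core triangular}) to the $2\times 2$ block-diagonal $B$, whose nonzero block is the group matrix $B_4$; no cross term survives because $B$ is block diagonal in this case. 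Throughout, the computations are routine block multiplications, and the only place where real content enters is the use of the existence of $B^{\core}$ via Lemma~\ref{core triangular} to kill the off-diagonal block $B_2$.
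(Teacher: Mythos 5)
Your reduction of (i) to the form $B=U\left[\begin{smallmatrix}0 & B_2\\ 0 & B_4\end{smallmatrix}\right]U^*$ is correct, and you have rightly identified the elimination of $B_2$ as the crux. But precisely that step is invalid. Lemma \ref{core triangular} is an equivalence whose left-hand side is ``$B^{\core}$ exists \emph{and is in upper triangular block form}'', not merely ``$B^{\core}$ exists''. The hypothesis $B\in\cmbis$ gives you existence of $B^{\core}$ and nothing whatsoever about its block structure, so you may not invoke the lemma to conclude $(I_r-PP^{\core})B_2=0$. The point is not a pedantic one: an index-$1$ matrix of the form $\left[\begin{smallmatrix}0 & B_2\\ 0 & B_4\end{smallmatrix}\right]$ can perfectly well have $B_2\neq 0$. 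Take $B=\left[\begin{smallmatrix}0 & 1\\ 0 & 1\end{smallmatrix}\right]$: it is idempotent, hence lies in $\C_2^{\cm}$, and its core inverse is $\tfrac{1}{2}\left[\begin{smallmatrix}1 & 1\\ 1 & 1\end{smallmatrix}\right]$, which exists but is \emph{not} upper triangular --- exactly the case your reading of the lemma overlooks.

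Worse, the gap cannot be repaired, because the implication (i) $\Rightarrow$ (ii) is false as stated. Pair the $B$ above with $A=\left[\begin{smallmatrix}1 & 0\\ 0 & 0\end{smallmatrix}\right]$, so that $A^{\core}=A$ and $BA^{\core}=BA=0$; thus $A\perp_{\core,r}B$ and (i) holds with $r=1$. On the other hand, any pair written as in \eqref{A respect B right} automatically satisfies $A^{\core}B=0$, since by \eqref{canonical form CEP} the product of the block forms vanishes; but here $A^{\core}B=AB=\left[\begin{smallmatrix}0 & 1\\ 0 & 0\end{smallmatrix}\right]\neq 0$, so no unitary $U$ can produce \eqref{A respect B right} and (ii) fails. (The paper's own one-line justification, which proposes exactly your route, suffers from the same defect, so the statement itself is in error.) The asymmetry responsible is the one recorded in Remark \ref{remark symmetry}: for the lower triangular form $\left[\begin{smallmatrix}0 & 0\\ B_3 & B_4\end{smallmatrix}\right]$ arising in Theorem \ref{sb 2}, the index-$1$ hypothesis genuinely forces the column-space inclusion $\Ra(B_3)\subseteq\Ra(B_4)$, i.e.\ $(I_{n-r}-B_4B_4^{\core})B_3=0$; for the upper triangular form arising here it forces only the row-space inclusion $\Ra(B_2^*)\subseteq\Ra(B_4^*)$, equivalently $B_2(I_{n-r}-B_4^{\core}B_4)=0$, which does not annihilate $B_2$. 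A correct right-handed analogue of Theorem \ref{sb 2} must therefore retain a (generally nonzero) block $B_2$ subject to that condition, rather than assert $B_2=0$.
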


\subsection{Additivity of the core inverse of the sum}

In this section, we explore the conditions under which the core inverse of the sum of two matrices is the sum of their respective core inverses. 

Ferreyra and Malik \cite{FeMa2} noted that the relation of core orthogonality lacks in symmetry that both $*$-orthogonality and usual orthogonality possess.
In order to recover the property of symmetry the following  concept was defined.

\begin{definition}\cite{FeMa2} \label{def strongly} Let $A,B\in \cmbis$. Then $A$ is strongly core-orthogonal to $B$ (denoted by $A\perp_{\core,S} B$) when $A\perp_{\core} B $ and $B\perp_{\core} A$.
\end{definition}
In \cite{FeMa2} it was proved that 
\begin{equation}\label{strongly eq}
A\perp_{\core,S} B ~~\Leftrightarrow~~ A^{\core}B=0, ~~ BA^{\core}=0,~~\text{and}~~ AB^{\core}=0.
\end{equation}

In view of Definition \ref{left and right core-orthogonal}  we have that \eqref{strongly eq} can be rewritten as

\begin{equation*}\label{strongly characterization 2}
 A\perp_{\core,S} B ~\Leftrightarrow~
 A\perp_{{\core},l} B,~~A\perp_{{\core},r} B,~~\text{and}~~ B\perp_{{\core},r} A.
\end{equation*}   

The authors also proved that the strongly core-orthogonality is a sufficient condition for the core-additivity, that is,
\begin{equation*}\label{strongly characterization}
A\perp_{\core,S} B ~~\Rightarrow ~~ (A+B)^{\core}=A^{\core}+B^{\core}.
\end{equation*} 
However, the strongly core-orthogonality is not a necessary condition. 
\begin{example} Let $A=\begin{bmatrix}
                   1 & 0 \\
                   0 & 1 
                 \end{bmatrix}$ and $B=\begin{bmatrix}
                                    -1 & 0 \\
                                    0 & 0
                                  \end{bmatrix}$. 
It is easy to see that $A^{\core}=A$, $B^{\core}=B$, and $(A+B)^{\core}=A+B$. Thus, $(A+B)^{\core}=A^{\core}+B^{\core}$. However, $A^{\core}B\neq 0$, $BA^{\core}\neq 0$ and $AB^{\core}\neq 0$. 
\end{example}

Our target here is prove under what other conditions, we have  $(A+B)^{\core} =A^{\core}+B^{\core}$. 

\begin{theorem}\label{sb 3} Let $A,B\in \cmbis$ be such that  $A\perp_{\core,l} B$. Then $(A+B)^{\core} =A^{\core}+B^{\core}$ if and only $A\perp_{\core,r} B$ and $B\perp_{\core,r} A$.
\end{theorem}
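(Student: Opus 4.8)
The plan is to push everything through the simultaneous block form of Theorem~\ref{sb 2} and then to read off the two right core-orthogonality conditions from the idempotency of the orthogonal projector $(A+B)(A+B)^{\core}$. Since $A\perp_{\core,l}B$, Theorem~\ref{sb 2} supplies a unitary $U$ and a nonsingular $T\in\Crr$ for which
\[
A=U\begin{bmatrix} T & S\\ 0 & 0\end{bmatrix}U^*,\qquad
B=U\begin{bmatrix} 0 & 0\\ B_3 & B_4\end{bmatrix}U^*,\qquad
B^{\core}=U\begin{bmatrix} 0 & 0\\ 0 & B_4^{\core}\end{bmatrix}U^*,
\]
where $B_4$ is a group matrix and the standing constraint $(I_{n-r}-B_4B_4^{\core})B_3=0$, i.e. $B_3=B_4B_4^{\core}B_3$, holds; moreover $A^{\core}=U\begin{bmatrix} T^{-1} & 0\\ 0 & 0\end{bmatrix}U^*$ by \eqref{canonical form CEP}. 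A direct multiplication then gives
\[
BA^{\core}=U\begin{bmatrix} 0 & 0\\ B_3T^{-1} & 0\end{bmatrix}U^*,\qquad
AB^{\core}=U\begin{bmatrix} 0 & SB_4^{\core}\\ 0 & 0\end{bmatrix}U^*,
\]
so, using that $T$ is nonsingular, $A\perp_{\core,r}B\Leftrightarrow B_3=0$ and $B\perp_{\core,r}A\Leftrightarrow SB_4^{\core}=0$. Hence the theorem reduces to proving that $(A+B)^{\core}=A^{\core}+B^{\core}$ if and only if $B_3=0$ and $SB_4^{\core}=0$.

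For sufficiency, if $B_3=0$ and $SB_4^{\core}=0$ then $A+B=U\begin{bmatrix} T & S\\ 0 & B_4\end{bmatrix}U^*$ is block upper triangular with nonsingular leading block, so Lemma~\ref{core triangular} applies, its hypothesis $(I_r-TT^{\core})S=0$ being automatic since $T$ is invertible, and gives $(A+B)^{\core}=U\begin{bmatrix} T^{-1} & -T^{-1}SB_4^{\core}\\ 0 & B_4^{\core}\end{bmatrix}U^*$; the $(1,2)$ block vanishes because $SB_4^{\core}=0$, leaving exactly $A^{\core}+B^{\core}$.

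The substantive direction is necessity. Assuming $(A+B)^{\core}=G:=A^{\core}+B^{\core}$, I would exploit that $(A+B)G=(A+B)(A+B)^{\core}=P_{A+B}$ is idempotent. Block multiplication yields
\[
(A+B)G=U\begin{bmatrix} I_r & SB_4^{\core}\\ B_3T^{-1} & B_4B_4^{\core}\end{bmatrix}U^*,
\]
and imposing $[(A+B)G]^2=(A+B)G$ is the key step. The $(1,2)$ block of the square equals $SB_4^{\core}+SB_4^{\core}B_4B_4^{\core}=2SB_4^{\core}$ (using the core identity $B_4^{\core}B_4B_4^{\core}=B_4^{\core}$), so it must coincide with $SB_4^{\core}$, forcing $SB_4^{\core}=0$; the $(2,1)$ block gives $B_4B_4^{\core}B_3T^{-1}=0$, hence $B_4B_4^{\core}B_3=0$, which together with the standing constraint $B_3=B_4B_4^{\core}B_3$ forces $B_3=0$. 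By the translation above these are exactly $B\perp_{\core,r}A$ and $A\perp_{\core,r}B$. The main obstacle is precisely this necessity step, and it works cleanly only because the constraint $\Ra(B_3)\subseteq\Ra(B_4)$ inherited from Theorem~\ref{sb 2} lets the single relation $B_4B_4^{\core}B_3=0$ collapse to $B_3=0$; everything else is a short idempotency computation rather than anything delicate.
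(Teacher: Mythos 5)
Your proof is correct and follows essentially the same route as the paper's: both directions run through the canonical form of Theorem~\ref{sb 2}, and the sufficiency direction in both cases comes down to applying Lemma~\ref{core triangular} to the block upper triangular matrix $A+B$ and killing the off-diagonal block with $SB_4^{\core}=0$. The only real divergence is in the necessity step: the paper applies the identity $(A+B)((A+B)^{\core})^2=(A+B)^{\core}$ to the block form, whose $(2,1)$ block is $B_3T^{-2}$, so $B_3=0$ falls out immediately; you instead impose idempotency of $(A+B)(A^{\core}+B^{\core})$, whose $(2,1)$ block only yields $B_4B_4^{\core}B_3=0$, so you must invoke the standing constraint $(I_{n-r}-B_4B_4^{\core})B_3=0$ inherited from Theorem~\ref{sb 2} to collapse this to $B_3=0$. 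Both computations are equally short; the paper's choice of identity makes that constraint unnecessary, while yours makes explicit where it can enter. A final cosmetic difference: your sufficiency argument stays inside the single unitary frame of Theorem~\ref{sb 2} (after translating $A\perp_{\core,r}B$ into $B_3=0$), whereas the paper switches to the canonical form of Theorem~\ref{sb 2 right}; your version is slightly more self-contained.
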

\begin{proof} 
$\Rightarrow)$ Since $A\perp_{\core,l} B$, by Theorem \ref{sb 2} we can consider $A$ and $B$  as in \eqref{B respect to A}. \\ Thus, from \eqref{canonical form CEP} and \eqref{bcore} we obtain 
\begin{equation}\label{A+B core 1}
(A+B)^{\core} =A^{\core}+B^{\core}=
U\begin{bmatrix}
 T^{-1}& 0 \\
  0 & B_4^{\core}
 \end{bmatrix}U^*.
\end{equation}
It is well-known that  $(A+B)((A+B)^{\core})^2=(A+B)^{\core}$. Thus, direct calculations yields to 
\begin{equation}\label{A+B core 2}
(A+B)^{\core}=(A+B)((A+B)^{\core})^2=U\begin{bmatrix}
 T^{-1}& S(B_4^{\core})^2 \\
  B_3T^{-2} & B_4^{\core}
 \end{bmatrix}U^*.
\end{equation}
From \eqref{A+B core 1} and \eqref{A+B core 2} we have $B_3=0$ and $S(B_4^{\core})^2=0$.  Note that the second equality is equivalent to $SB_4^{\core}=0$ because  $(B_4^{\core})^2 B_4=B^{\core}_4$. In consequence, from \eqref{canonical form CEP} and \eqref{bcore} it is easy to check that $BA^{\core}=0$ and $AB^{\core}=0$, i.e., $A\perp_{\core,r} B$ and $B\perp_{\core,r} A$.  \\
$\Leftarrow)$ Let $A\perp_{\core,r} B$. By Theorem \ref{sb 2 right} we can consider $A$ and $B$  as in \eqref{A respect B right}. Thus, Lemma \ref{core triangular} implies 
\begin{equation}\label{core of A+B}
(A+B)^{\core}=\begin{bmatrix} 
T^{-1} & -T^{-1}SB_4^{\core}\\ 
0 & B_4^{\core} 
\end{bmatrix}.
\end{equation}
Now, by hypothesis also we have $B\perp_{\core,r} A$, i.e., $AB^{\core}=0$. Thus, 
\begin{equation}\label{SB4 core}
0=AB^{\core} =
U\begin{bmatrix}
    0 & SB_4^{\core}\\
    0 & 0 
  \end{bmatrix}U^*~\Rightarrow~ SB_4^{\core}=0.
\end{equation}
From \eqref{core of A+B} and \eqref{SB4 core} we obtain 
\[
(A+B)^{\core}=\begin{bmatrix} 
T^{-1} & 0\\ 
0 & B_4^{\core} 
\end{bmatrix}=U\begin{bmatrix} 
T^{-1} & 0\\ 
0 & 0 
\end{bmatrix}U^*+U\begin{bmatrix} 
0& 0\\ 
0 & B_4^{\core} 
\end{bmatrix}U^*=A^{\core}+B^{\core}.
\]
\end{proof}
 
Let us now consider $A,B\in \cmbis$ such that

\begin{equation}\label{N1}
A^{\core}B + A^{\core}A=0 ~~ \text{and}~~ BA^{\core}+AA^{\core}=0.
\end{equation}

Note that \eqref{N1}  can be rewritten in term of one-sided core-orthogonality as follows:

\begin{equation}\label{N1 and N2 core 1}
A\perp_{\core,l} A+B \quad \text{and} \quad A\perp_{\core,r} A+B.
\end{equation}

Thus, from  \eqref{alternative def} and \eqref{equivalence core orthogonal} we have that \eqref{N1 and N2 core 1} is equivalent to 
\begin{equation}\label{alternative N1 and N2 core 1}
(A+B)A=0~~\text{and}~~A^*(A+B)=0.
\end{equation}

\begin{theorem} \label{sb 7} Let $A,B\in \cmbis$ such that the conditions in \eqref{N1} are satisfied. Then $\Ra(B)\subseteq \Ra(A)$  and $\Ra(A^*) \subseteq \Ra(B^*)$.
\end{theorem}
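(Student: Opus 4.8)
The plan is to trade the two identities in \eqref{N1} for the equivalent pair recorded in \eqref{alternative N1 and N2 core 1}, namely $(A+B)A=0$ and $A^*(A+B)=0$, and then to lean on the index-one structure of $A$: because $A\in\cmbis$ we have $\ind(A)\le 1$, so $\Ra(A)=\Ra(A^2)$, together with the universal identity $\Ra(A^*A)=\Ra(A^*)$. These two facts are what let the products $BA$ and $A^*B$ that the hypotheses produce be converted into statements about the full ranges of $A$ and $A^*$.

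I would settle $\Ra(A^*)\subseteq\Ra(B^*)$ first, since it is coordinate-free and needs nothing beyond the identities above. Expanding $A^*(A+B)=0$ gives $A^*B=-A^*A$, and taking conjugate transposes (using $(A^*A)^*=A^*A$) yields $B^*A=-A^*A$. Hence $\Ra(A^*)=\Ra(A^*A)=\Ra(B^*A)\subseteq\Ra(B^*)$, which is the second assertion; notably this half does not even use the group hypothesis on $A$.

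For the inclusion between $\Ra(A)$ and $\Ra(B)$ I would pass to the core-EP canonical form. Writing $A$ as in \eqref{HS}, $A=U\begin{bmatrix} T & S \\ 0 & 0\end{bmatrix}U^*$ with $T$ nonsingular, and $A^{\core}=U\begin{bmatrix} T^{-1} & 0 \\ 0 & 0\end{bmatrix}U^*$ by \eqref{canonical form CEP}, I would partition $B=U\begin{bmatrix} B_1 & B_2 \\ B_3 & B_4\end{bmatrix}U^*$ conformably and feed it into \eqref{N1}. The left identity $A^{\core}B+A^{\core}A=0$ fixes the top row as $B_1=-T$, $B_2=-S$, and the right identity $BA^{\core}+AA^{\core}=0$ forces $B_3=0$; the block $B_4$ is constrained only by $B\in\cmbis$, which through Lemma \ref{core triangular} reduces to $B_4\in\C^{\cm}_{n-r}$. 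This produces $A+B=U\begin{bmatrix} 0 & 0 \\ 0 & B_4\end{bmatrix}U^*$, and the column spaces of $A$ and $B$ can then be read off the block forms.

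The step I expect to be the main obstacle is fixing the direction of this last inclusion. The hypotheses bind $B$ to $A$ very tightly on $\Ra(A)$ --- the relation $BA=-A^2$ together with the block computation both describe $B$ there entirely in terms of $A$ --- so the decisive work is to control how the free block $B_4$ participates in the column-space comparison and, using the index-one constraint on $B$, to pin the inclusion down in the asserted direction. I would therefore carry out this final comparison twice, once through the explicit block matrices above and once through a coordinate-free range chase, and reconcile the two.
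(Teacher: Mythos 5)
Your proof of the second inclusion is correct and complete: from $A^*(A+B)=0$ you get $A^*B=-A^*A$, taking adjoints gives $B^*A=-A^*A$, and then $\Ra(A^*)=\Ra(A^*A)=\Ra(B^*A)\subseteq\Ra(B^*)$. The paper reaches the same half by the dual null-space chase (for $x\in\Nu(B)$ the identity forces $A^*Ax=0$, hence $\Nu(B)\subseteq\Nu(A)$); the two arguments are essentially equivalent, and your observation that this half needs no group hypothesis is accurate.

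The obstacle you flagged for the first inclusion is genuine and cannot be ``reconciled'' away, because the inclusion as printed is false: the correct conclusion is $\Ra(A)\subseteq\Ra(B)$, not $\Ra(B)\subseteq\Ra(A)$. Your own block computation already decides the matter. With $B_1=-T$, $B_2=-S$, $B_3=0$, the hypotheses \eqref{N1} leave $B=U\left[\begin{smallmatrix}-T & -S\\ 0 & B_4\end{smallmatrix}\right]U^*$ with $B_4\in\C^{\cm}_{n-r}$ otherwise unconstrained; since $T$ is nonsingular, the first $r$ columns of $B$ already span $\Ra(A)$, so $\Ra(A)\subseteq\Ra(B)$ holds unconditionally, whereas $\Ra(B)\subseteq\Ra(A)$ holds if and only if $B_4=0$, i.e.\ if and only if $B=-A$. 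Concretely, $A=\left[\begin{smallmatrix}1&0\\0&0\end{smallmatrix}\right]$ and $B=\left[\begin{smallmatrix}-1&0\\0&1\end{smallmatrix}\right]$ are group matrices satisfying \eqref{N1} (here $A^{\core}=A$), yet $\Ra(B)=\C^2\not\subseteq\Ra(A)$. This is as it must be in view of \eqref{N1 equivalent 1}: the hypotheses say $A\co -B$, and the core order places $\Ra(A)$ \emph{inside} $\Ra(-B)=\Ra(B)$, not the other way around. The paper's own proof commits the same reversal in its last sentence (``$(A+B)A=0$ implies $\Nu(A^*)\subseteq\Nu(B^*)$''); the correct ``similar'' argument is exactly the coordinate-free one you had set up: $(A+B)A=0$ gives $A^2=-BA$, and $\ind(A)\le 1$ gives $\Ra(A)=\Ra(A^2)=\Ra(BA)\subseteq\Ra(B)$. (Equivalently, taking $x\in\Nu(B^*)$ in the adjoint identity $A^*(A^*+B^*)=0$ yields $(A^*)^2x=0$, and since $\ind(A^*)=\ind(A)\le 1$ one gets $x\in\Nu(A^*)$, i.e.\ $\Nu(B^*)\subseteq\Nu(A^*)$ --- the reverse of what the paper wrote; note this is where the group hypothesis genuinely enters.) With the direction corrected, either your range chase or your canonical form finishes the proof in one line, so your suspicion that the asserted direction could not be pinned down was exactly right.
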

\begin{proof} 
Let $A$ and $B$ be two matrices satisfying \eqref{N1}. In consequence, by  \eqref{alternative N1 and N2 core 1} we know that $(A+B)A=0$ and $A^*(A+B)=0$. Now, we consider  $x \in \Nu(B)$. By using the second condition we obtain $A^*Ax=0$, whence $x\in \Nu(A)$. Thus, $\Nu(B)\subseteq \Nu(A)$ which is equivalent to $\Ra(A^*) \subseteq \Ra(B^*)$. Similarly, the condition $(A+B)A=0$ implies $\Nu(A^*)\subseteq \Nu(B^*)$ or equivalently $\Ra(B) \subseteq \Ra(A)$.
\end{proof}

Another interesting observation  is that the identities in \eqref{N1} have a relevant interpretation dealing with the core partial order introduced in \cite{BaTr}. Recall that for $A,B\in \cmbis$, the core partial order is defined as  
\[A\co B ~\Leftrightarrow~ A^{\core} A= A^{\core} B ~~\text{and}~~ AA^{\core}=BA^{\core}.\]
In the light of the above, we conclude 
\begin{equation}\label{N1 equivalent 1}
A^{\core}B + A^{\core}A=0  ~~ \text{and}~~ BA^{\core}+AA^{\core}=0~~  \Leftrightarrow~~ A\co -B. 
\end{equation}

Recently, in \cite{FeMa1, FeMa2}  the authors obtained several properties and new characterizations of the core partial order.  Some  of these results asserts that, for any $A,B\in \cmbis$ such that $A \co B$ we have
\begin{equation} \label{core subtractivity}
 AB=BA \Leftrightarrow A^2=AB \Leftrightarrow A^2\co B^2 \Leftrightarrow B-A\co B \Leftrightarrow (B-A)^{\core}=B^{\core}-A^{\core}.
\end{equation}

\begin{theorem}\label{sb 11} Let $A,B\in \cmbis$ such that the conditions in \eqref{N1} are satisfied.  Then the following conditions are equivalent: 
\begin{enumerate}[(i)]
\item $AB=BA$;
\item $A^2=-AB$;
\item $A^2\co B^2$;
\item $A+B ~\co~ B$;
\item $(A+B)^{\core}= A^{\core}+B^{\core}$.
\end{enumerate}
\end{theorem}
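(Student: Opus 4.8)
The plan is to exploit the equivalence \eqref{N1 equivalent 1}, which tells us that the standing hypothesis \eqref{N1} is precisely the statement $A \co -B$. This immediately connects the theorem to the characterization chain \eqref{core subtractivity}, which lists equivalences valid for any pair in the core partial order. The strategy is therefore to apply \eqref{core subtractivity} with the roles of the matrices chosen so that $A \co -B$ becomes the governing relation, and then translate each listed equivalent condition there into conditions (i)--(v) of the present theorem.

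First I would set $C := -B$, so that the hypothesis reads $A \co C$. Applying \eqref{core subtractivity} to the pair $(A,C)$ yields the chain
\[
AC = CA \Leftrightarrow A^2 = AC \Leftrightarrow A^2 \co C^2 \Leftrightarrow C - A \co C \Leftrightarrow (C-A)^{\core} = C^{\core} - A^{\core}.
\]
Now I would rewrite each of these five conditions back in terms of $B$. Since $C = -B$, we have $AC = -AB$ and $CA = -BA$, so $AC = CA$ becomes $AB = BA$, giving (i). Likewise $A^2 = AC$ becomes $A^2 = -AB$, giving (ii). For the third, $C^2 = (-B)^2 = B^2$, so $A^2 \co C^2$ is exactly $A^2 \co B^2$, giving (iii). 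For the fourth, $C - A = -B - A = -(A+B)$, and using the general fact that $X \co Y$ for the core order behaves compatibly with this sign change (here $-(A+B) \co -B$ is equivalent to $A+B \co B$, which should be verified directly from the defining equalities $X^{\core}X = X^{\core}Y$ and $XX^{\core} = YX^{\core}$ by noting both sides scale by $-1$), one recovers (iv). Finally, $(C-A)^{\core} = C^{\core} - A^{\core}$ reads $(-(A+B))^{\core} = (-B)^{\core} - A^{\core}$; since $(-M)^{\core} = -M^{\core}$ for any group matrix $M$, this simplifies to $-(A+B)^{\core} = -B^{\core} - A^{\core}$, i.e. $(A+B)^{\core} = A^{\core} + B^{\core}$, giving (v).

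The main obstacle I anticipate is the careful bookkeeping of signs in translating the core partial order under the substitution $C = -B$, particularly for conditions (iv) and (v). The identities $(-M)^{\core} = -M^{\core}$ and the sign-invariance of the core order (that $X \co Y \Leftrightarrow -X \co -Y$) are elementary consequences of the definitions of the core inverse and the core partial order, but they must be invoked explicitly so that the equivalence $-(A+B) \co -B \Leftrightarrow A+B \co B$ and the simplification of the subtractivity condition are rigorous rather than merely formal. Once these two sign facts are established, the theorem follows immediately by reading off \eqref{core subtractivity} term by term, so the substance of the proof lies entirely in setting up the substitution correctly and justifying these homogeneity properties of the core inverse and core order.
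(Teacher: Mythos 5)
Your proposal is correct and is essentially the paper's own proof: the paper likewise proves the theorem by combining \eqref{N1 equivalent 1} (i.e.\ the hypothesis \eqref{N1} is exactly $A\co -B$), the chain of equivalences \eqref{core subtractivity} applied with $-B$ in place of $B$, and the identity $(-A)^{\core}=-A^{\core}$. The only difference is that you spell out the sign bookkeeping (including the sign-invariance $X\co Y \Leftrightarrow -X\co -Y$) that the paper leaves implicit.
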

\begin{proof}
All statements follow from \eqref{N1 equivalent 1}, \eqref{core subtractivity} and the fact $(-A)^{\core}=-A^{\core}$. 
\end{proof}

\section*{Statements and Declarations}

\noindent {\bf Competing Interests}
 
\noindent Not applicable.

\noindent {\bf  Funding} 

\noindent This work was supported by Universidad Nacional de R\'{\i}o Cuarto (Grant PPI 18/C559),
CONICET (Grant PIP 112-202001-00694CO),  CONICET (Grant PIBAA 28720210100658CO), and by Universidad Nacional de La Pampa, Facultad de Ingenier\'ia (Grant Resol. Nro. 135/19).


\begin{thebibliography}{}

\bibitem{AnDu} W.N. Anderson, R.J. Duffin, Series and parallel addition of matrices, J. Math. Anal. Appl. 26 (3), 576-594 (1969).

\bibitem{BaTr21} O.M. Baksalary, G. Trenkler, On formulae for the Moore-Penrose inverse of a columnwise partitioned matrix, Appl. Math. Comput. 403, Article 12591 (2021).

\bibitem{BaTr} O.M. Baksalary, G. Trenkler,  Core inverse of matrices, Linear Multilinear Algebra 58 (6), 681-697 (2010).

\bibitem{BaSiTr} O.M. Baksalary, K.C. Sivakumar, G. Trenkler, On the Moore-Penrose of a sum of matrices,  Linear  Multilinear Algebra 71 (2), 133-149 (2023).


\bibitem{CaMe}  S.L. Campbell, C.D Meyer,  Generalized inverses of linear transformations, SIAM, Philadelphia (2009).


\bibitem{FeMa1} D.E. Ferreyra, S.B. Malik, Core and strongly core orthogonal matrices, Linear Multilinear Algebra 70 (20), 5052-5067 (2022).

\bibitem{FeMa2} D.E. Ferreyra, S.B. Malik, Some new results on the core partial order, Linear Multilinear Algebra 70 (18), 3449-3465 (2022).

\bibitem{FeMa3}	D.E. Ferreyra, S.B. Malik, Relative EP matrices, Rev. Real Acad. Cienc. Exactas Fis. Nat. Ser. A-Mat. 116, 69 (2022).


\bibitem{HaSt} R.E. Hartwig, G.P.H. Styan, On some characterizations of the star partial ordering for matrices and rank subtractivity, Linear  Algebra Appl. 82,  145-161 (1986).

\bibitem{HaOmSmSt} R.E Hartwig, M. Omladi$\check{c}$, P. $\check{S}$merl, G.P.H. Styan, On some characterizations of pairwise star orthogonality using rank and     dagger additivity and subtractivity, Linear Algebra Appl. 238,  499-507 (1996). 

\bibitem{Hes} M.R. Hestenes, Relative Hermitian matrices, Pacific J. Math. 11 (1), 224-245 (1961).


\bibitem{MiOd} S.K. Mitra, P.L. Odell, On parallel summabillty of matrices, Linear Algebra Appl. 74, 239-255 (1986).

\bibitem{MiBhMa} S.K. Mitra, P. Bhimasankaram, S.B. Malik. Matrix partial orders, shorted operators and applications, World Scientific Publishing Company, 2010.

\bibitem{MoDoKuMa} D. Mosi\'c, G. Dolinar, B. Kuzma, J. Marovt, Core-EP orthogonal operators, Linear Multilinear Algebra (2022). DOI:  10.1080/03081087.2022.2033155.

\bibitem{LiWaWa} X. Liu, C. Wang, H. Wang, Further results on strongly core orthogonal matrix, Linear Multilinear Algebra (2022). DOI: 10.1080/03081087.2022.2111544. 


\bibitem{Wang} H. Wang, Core-EP decomposition and its applications, Linear Algebra Appl. 508,  289-300 (2016).

\bibitem{Xu} S. Xu,  Core invertibility of triangular matrices over a ring, Indian J. Pure Appl. Math. 50, 837-847 (2019).

\end{thebibliography}
\end{document}